\newtheorem{proposition}{Proposition}
\newtheorem{theorem}{Theorem}
\theoremstyle{definition}
\newtheorem{definition}{Definition}
\theoremstyle{definition}
\newtheorem{example}{Example}
\theoremstyle{definition}
\newcommand{\Z}{\mathbb{Z}}
\newcommand{\R}{\mathbb{R}}
\begin{document}
\title[
On envelopes of circle families in the plane]
{On envelopes of circle families in the plane}
\author[Y.~Wang]{Yongqiao Wang
}
\address{
School of Science, Dalian Maritime University, Dalian 116026, P.R. China
}
\email{wangyq@dlmu.edu.cn}
\author[T.~Nishimura]{Takashi Nishimura
}
\address{
Research Institute of Environment and Information Sciences,
Yokohama National University,
Yokohama 240-8501, Japan}
\email{nishimura-takashi-yx@ynu.ac.jp}
\begin{abstract}
In this paper we {\color{black}investigate} the relationships between envelopes
of circle families and some special curves in the plane,
such as evolutes, pedals, evolutoids and pedaloids.
\end{abstract}
\subjclass[2010]{57R45, 58K05} 
\keywords{envelope, singularity, frontal, evolutoid,
	pedaloid, creative.}


\date{}

\maketitle

\section{Introduction\label{section1}}
Envelopes of plane curve families have been well investigated since the beginning
of the history of differential geometry {\color{black}(see for instance \cite{history})},
and they often arise in many guises
in physical sciences. For the most typical case, the straight line families
in the plane have been studied.
{\color{black}In \cite{NIS1} (see also \cite{NIS2} which is
an easy to understand expository article focused on envelopes of line families
in the plane), solving four basic problems (existence problem,
representation problem, uniqueness problem and equivalence problem of definitions),
t}he second author constructed a general theory for envelopes created by straight line
families. The study on circle families in the plane is also important
since the envelopes of them have {\color{black}several} practical applications.
For instance, {\color{black}there is an} application to soil mechanics.
In analysis of the stability of soil masses, the
{\color{black}\it shear strength} $\tau_{f}$ of a soil at a point on a particular plane is
expressed as a linear function of the effective
{\color{black}\it normal stress} $\sigma_{f}$ at failure:
\begin{align*}
\tau_{f}=\sigma_{f}\tan\varphi+c,
\end{align*}
where $\varphi$ and $c$ are the {\it angle of shearing resistance}
and {\it cohesion intercept} respectively.
A method using {\it Mohr circles} to obtain the shear strength parameters $\varphi$
and $c$ {\color{black}can be found}
in \cite{CR}.
{\color{black}According to \cite{CR}, a}
brief description of this method is given as follows.
The stress state of a soil can be represented by a {\color{black}\it Mohr circle}
which is defined by the {\color{black}\it effective principal stresses} $\sigma_1$ and $\sigma_2$. The center and the radii of the Mohr circle are
$(\frac{\sigma_1+\sigma_2}{2},0)$ and $\frac{\sigma_1-\sigma_2}{2}$, respectively.
By experiments, we obtain some values of effective principal stresses $\sigma_1$ and $\sigma_2$ at failure. The Mohr circles in terms of effective principal stress are
drawn in Figure 1.
\begin{figure}[h]
	\begin{center}
		\includegraphics[width=10cm]
		{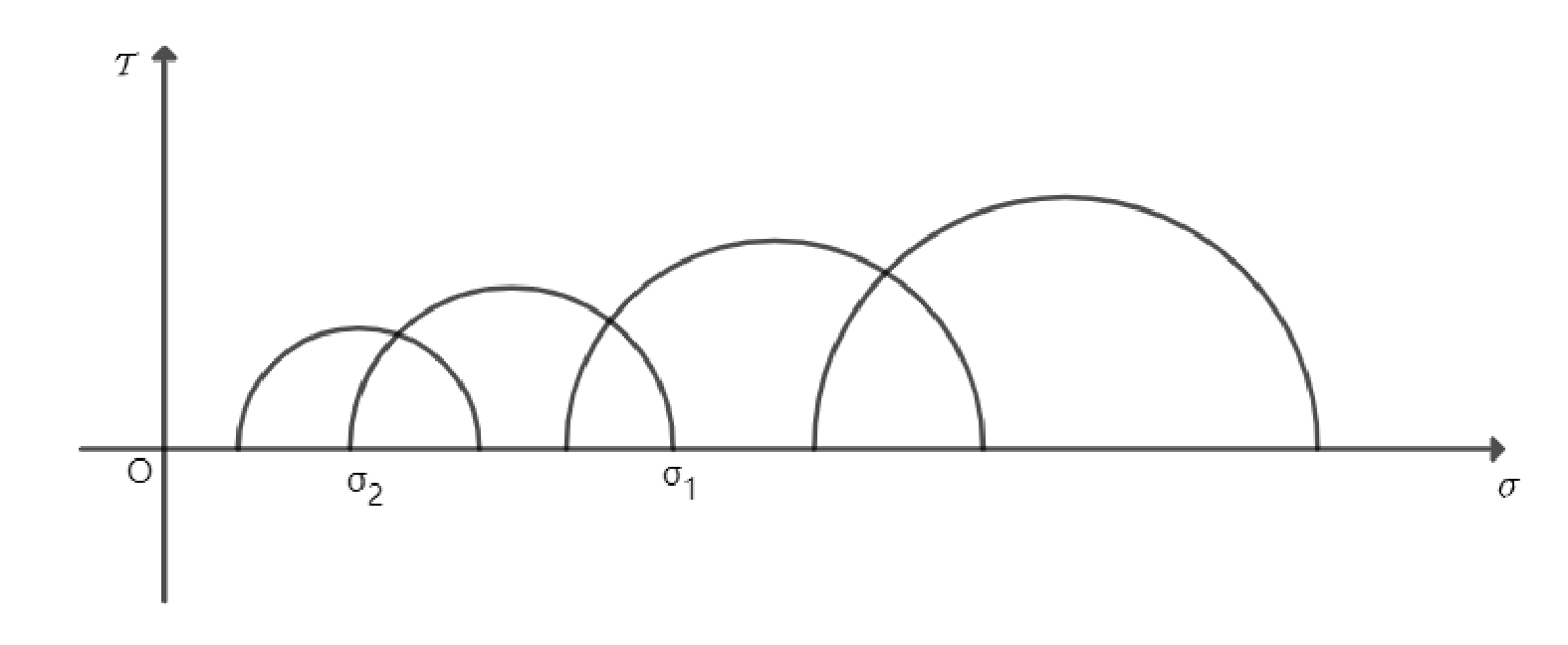}
		\caption{Mohr circles
		}
	\end{center}
\end{figure}
The envelope created by Mohr circles is called the {\color{black}\it Mohr}
{\it failure envelope} which may be a slightly curved curve. Then the shear strength
parameters $\varphi$ and $c$ can be obtained by approximating the curved envelope to a straight line, namely the slope of the straight line equals $\tan\varphi$ and the intercept of straight line on the vertical axis is $c$ (see Figure 2).
\begin{figure}[h]
	\begin{center}
		\includegraphics[width=12cm]
		{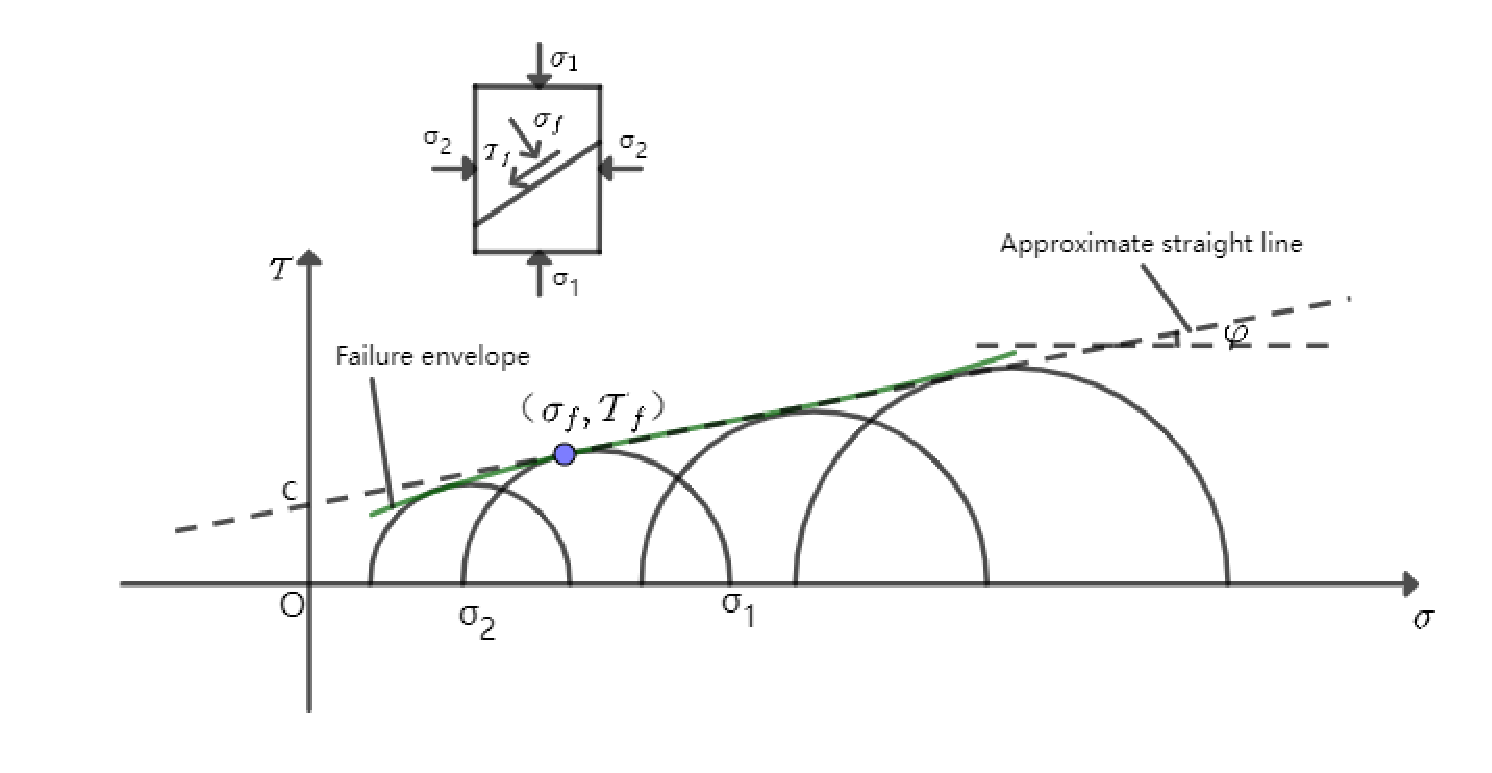}
		\caption{The failure envelope and the approximated straight line
		}
	\end{center}
\end{figure}
Therefore, in order to
investigate the shear strength parameters as precisely as possible, it is important to systematically study the envelopes created by circle families. Moreover, the study of envelopes of circle families can be applied to seismic survey
({\color{black}for example} see \cite{P}).
For these reasons, the authors have constructed a general theory on envelopes created by circle families in the plane {\color{black}in} \cite{WN}.

For a given point $P\in\mathbb{R}^2$ and a positive number
$\lambda\in\mathbb{R}_{+}{\color{black}=\{\lambda\in \R\; |\; \lambda>0\}}$,
the circle centered at $P$ with radius $\lambda$ is naturally defined by
\begin{align*}
	C_{(P,\lambda)}=\{(x,y)\in\mathbb{R}^2|((x,y)-P)\cdot((x,y)-P)=\lambda^2\},
\end{align*}
where the dot {\color{black}in the center}
stands for the standard scalar {\color{black}product of two vectors}.
{\color{black}For an open interval $I$, let}
$\gamma:I\rightarrow\mathbb{R}^2$
{\color{black}(resp., $\lambda:I\rightarrow\mathbb{R}_{+}$ )}
be a {\color{black}$C^\infty$ mapping
(resp., be a $C^\infty$ function)}.   Then,
the circle family $\mathcal{C}_{(\gamma,\lambda)}$ is naturally defined
{\color{black}as follows}.
\begin{align*}
	\mathcal{C}_{(\gamma,\lambda)}=\{C_{(\gamma(t),\lambda(t))}\}_{t\in I}.
\end{align*}
It is reasonable to assume that the normal vector at any point of the curve $\gamma$
is well-defined. Thus,
{\color{black}we naturally reach the following definition.}

\begin{definition}\label{definition1}
	A {\color{black}$C^\infty$
mapping} $\gamma:I\rightarrow\mathbb{R}^2$ is called a
{\color{black}\it frontal} if there exists a
{\color{black}$C^\infty$}
mapping $\nu:I\rightarrow S^1$
such that $(d\gamma/dt)(t)\cdot\nu(t)=0$ for each $t\in I$.
\end{definition}
For a frontal $\gamma$, the mapping $\nu:I\rightarrow S^1$ is called the {\it Gauss mapping} of $\gamma$. We set $\mu(t)=J(\nu(t))$, where $J$ is the anti-clockwise rotation by $\pi/2$. Then we have a moving frame $\{\nu(t),\mu(t)\}$ along the frontal $\gamma(t)$.
Denote $\dot{\nu}(t)\cdot\mu(t)=l(t)$,
{\color{black}where $\dot{\nu}(t)\cdot\mu(t)=
\left(d\nu/dt\right)(t)\in T_{\nu(t)}S^1\subset T_{\nu(t)}\R^2$ and two
vector spaces $\R^2$ and $T_{\nu(t)}\R^2$ are canonically identified.  Then,}
the Frenet formula of $\gamma(t)$ according to the moving frame $\{\nu(t),\mu(t)\}$ is given by
\begin{center}
	$\left(\begin{array}{c}
		\dot{\nu}(t) \\
		\dot{\mu}(t)
	\end{array}
	\right)=\left(\begin{array}{cc}
		0 & l(t) \\
		-l(t) & 0
	\end{array}
	\right)\left(\begin{array}{c}
		\nu(t) \\
		\mu(t)
	\end{array}
	\right).$
\end{center}
In addition, there exists a {\color{black}$C^\infty$}
function $\beta(t)$ such that $\dot{\gamma}(t)=\beta(t)\mu(t)$. Hence, $t_0$ is a singular point of $\gamma$ if $\beta(t_0)=0$. The pair $(l(t),\beta(t))$ is called the curvature of the frontal $\gamma$, which is an important invariant of frontals (cf. \cite{FT1}). A point $t_0\in I$ is {\color{black}called}
an {\it inflection point} of $\gamma$ if $l(t_0)=0$. We say that a frontal $\gamma$ is a {\color{black}\it front} if $(l(t),\beta(t))\neq(0,0)$ for any $t\in I$.
In this paper, the curve $\gamma:I\rightarrow\mathbb{R}^2$ {\color{black}used for the definition of} a circle family $\mathcal{C}_{(\gamma,\lambda)}$
is assumed to be a frontal, and the following is adopted as the definition of an envelope created by a circle family.
\begin{definition}\label{d1.2}
	Let $\mathcal{C}_{(\gamma,\lambda)}$ be a circle family.
A {\color{black}$C^\infty$}
mapping $f:I\rightarrow\mathbb{R}^2$ is called an {\it envelope} of
$\mathcal{C}_{(\gamma,\lambda)}$ if the following two conditions are satisfied
{\color{black} for any $t\in I$}.\\
	(1) $f(t)\in \mathcal{C}_{(\gamma(t),\lambda(t))}$.\\
	(2) ${\color{black}\dot{f}}(t)\cdot\big(f(t)-\gamma(t)\big)=0$.
\end{definition}
{\color{black}The following is the key notion for envelopes of circle families.}

\begin{definition}[{\color{black}\cite{WN}}] \label{creative}
	Let $\gamma:I\rightarrow\mathbb{R}^2$ be a frontal with Gauss mapping $\nu:I\rightarrow S^1$ and let $\lambda:I\rightarrow\mathbb{R}_{+}$ be a
	positive {\color{black}$C^\infty$} function.
Then, the circle family $\mathcal{C}_{(\gamma,\lambda)}$ is said to be
{\color{black}\it creative}
if there exists a {\color{black}$C^\infty$}
mapping $\widetilde{\nu}:I\rightarrow S^1$ such that
	\begin{align*}
		\frac{d\lambda}{dt}(t)=-\beta(t)(\widetilde{\nu}(t)\cdot\mu(t)).
	\end{align*}
Set $\cos\theta(t)=-\widetilde{\nu}(t)\cdot\mu(t)${\color{black}.  Then}
the creative condition is equivalent to
{\color{black}the condition} that there exists a {\color{black}$C^\infty$}
funciton $\theta:I\rightarrow\mathbb{R}$ such that
the following identity holds for any $t\in I$.
	\begin{align*}
		\frac{d\lambda}{dt}(t)=\cos\theta(t)\beta(t).
	\end{align*}
	In this case, we have $\widetilde{\nu}(t)=-\cos\theta(t)\mu(t)\pm\sin\theta(t)\nu(t).$
\end{definition}

%
\begin{theorem}[{\color{black}\cite{WN}}]\label{t1.4}
	Let $\gamma: I\to \mathbb{R}^2$
	be a frontal with Gauss mapping $\nu: I\to S^1$ and let
	$\lambda: I\to \mathbb{R}_+$ be a positive {\color{black}$C^\infty$} function.
	Then, the following three hold.
	\begin{enumerate}
		\item[(1)] The circle family $\mathcal{C}_{(\gamma, \lambda)}$
		creates an envelope if and only if $\mathcal{C}_{(\gamma, \lambda)}$ is creative.
		\item[(2)] Suppose that the circle family
		$\mathcal{C}_{(\gamma, \lambda)}$ creates
		an envelope $f: I\to \mathbb{R}^2$.     Then, the created envelope
		$f$ is represented as {\color{black}follows}:
		\[
		f(t) = \gamma(t) + \lambda(t)\widetilde{\nu}(t),
		\]
		where $\widetilde{\nu}: I\to S^1$ is the mapping
		defined in Definition \ref{creative}.
		\item[(3)] Suppose that the circle family
		$\mathcal{C}_{(\gamma, \lambda)}$ creates
		an envelope.
		Then, the number of envelopes created by $\mathcal{C}_{(\gamma, \lambda)}$ is
		characterized as follows.
		\smallskip
		\begin{enumerate}
			\item[(3-i)] The circle family
			$\mathcal{C}_{(\gamma, \lambda)}$
			creates a unique envelope
			if and only if
			the set consisting of $t\in I$ satisfying $\beta(t)\ne 0$  and
			$\frac{d\lambda}{dt}(t)=\pm \beta(t)$ is dense in $I$.
			\item[(3-ii)] There are exactly two distinct envelopes created by
			$\mathcal{C}_{(\gamma, \lambda)}$ if and only if
			the set of $t\in I$ satisfying $\beta(t)\ne 0$ is dense in $I$ and
			there exists at least one $t_0\in I$ such that the strict inequality
			$|\frac{d\lambda}{dt}(t_0)| < |\beta(t_0)|$ holds.
			\item[(3-$\infty$)] There are uncountably many distinct
			envelopes created by
			$\mathcal{C}_{(\gamma, \lambda)}$ if and only if
			the set of $t\in I$ satisfying $\beta(t)\ne 0$ is not dense in $I$.
		\end{enumerate}
	\end{enumerate}
\end{theorem}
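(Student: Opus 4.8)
The plan is to translate the entire statement into an analysis of the unit direction field running from the centre $\gamma(t)$ to the candidate contact point. Given an envelope $f$, since $|f(t)-\gamma(t)|=\lambda(t)>0$ and $f,\gamma,\lambda$ are $C^\infty$, the field $\widetilde{\nu}(t):=(f(t)-\gamma(t))/\lambda(t)$ is a $C^\infty$ map $I\to S^1$ with $f=\gamma+\lambda\widetilde{\nu}$. Differentiating and using $\widetilde{\nu}\cdot\widetilde{\nu}=1$ (hence $\dot{\widetilde{\nu}}\cdot\widetilde{\nu}=0$) together with $\dot\gamma=\beta\mu$, condition (2) of Definition~\ref{d1.2} becomes $\lambda(t)\bigl(\beta(t)(\widetilde{\nu}(t)\cdot\mu(t))+\dot\lambda(t)\bigr)=0$, i.e.\ $\dot\lambda=-\beta(\widetilde{\nu}\cdot\mu)$, which is precisely the creative condition of Definition~\ref{creative}. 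Conversely, if $\mathcal{C}_{(\gamma,\lambda)}$ is creative with witness $\widetilde{\nu}$, then $f:=\gamma+\lambda\widetilde{\nu}$ satisfies (1) trivially (as $|f-\gamma|^2=\lambda^2$) and (2) by reversing this computation. This proves (1) and (2) simultaneously, exhibits the required formula $f=\gamma+\lambda\widetilde{\nu}$, and identifies the set of envelopes with the set of $C^\infty$ maps $\widetilde{\nu}:I\to S^1$ satisfying the creative identity; so (3) becomes a counting problem for such maps.

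For (3) I would use the normal form $\widetilde{\nu}=-\cos\theta\,\mu\pm\sin\theta\,\nu$: a valid $\widetilde{\nu}$ is pinned down by its $\mu$-component together with a choice of sign for $\widetilde{\nu}\cdot\nu$. On $N:=\{t\in I\mid\beta(t)\ne0\}$ the creative identity forces $\widetilde{\nu}\cdot\mu=-\dot\lambda/\beta$, so this component is completely determined there; on $I\setminus N$ the identity reads $\dot\lambda=0$, a constraint on $\lambda$ alone, and imposes nothing on $\widetilde{\nu}$. Suppose first that $N$ is dense. Then $\widetilde{\nu}\cdot\mu$ is determined on all of $I$ by continuity, so every valid $\widetilde{\nu}$ uses the same smooth branch $\theta$ (available by creativity) with $\cos\theta=-\widetilde{\nu}\cdot\mu$, and in particular $|\widetilde{\nu}\cdot\nu|=|\sin\theta|$ is determined. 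If $\sin\theta\equiv0$ --- equivalently $|\dot\lambda|=|\beta|$ on $N$, i.e.\ the set in (3-i) is dense --- then $\widetilde{\nu}=-\cos\theta\,\mu$ is forced and the envelope is unique, giving (3-i). If $\sin\theta\not\equiv0$, the only remaining freedom is the sign of $\widetilde{\nu}\cdot\nu$ on the connected components of $\{\sin\theta\ne0\}$, and I claim that $C^\infty$-smoothness of $\widetilde{\nu}$ across the zeros of $\sin\theta$ forces this sign to be globally constant, leaving exactly the two envelopes $\gamma+\lambda(-\cos\theta\,\mu\pm\sin\theta\,\nu)$, which is (3-ii).

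Now suppose $N$ is not dense, fix an open interval $J\subseteq I\setminus N$, and take any envelope $f_0$ (it exists by part~(1)) with witness $\widetilde{\nu}_0$. Since the creative identity imposes nothing on $\widetilde{\nu}$ over $J$, for any $C^\infty$ bump $\phi:I\to\R$ supported in $J$ and any $s\in\R$ the rotated field $\widetilde{\nu}_s(t):=R_{s\phi(t)}\widetilde{\nu}_0(t)$ (with $R_\alpha$ the rotation by $\alpha$) is again a valid witness, and the envelopes $f_s=\gamma+\lambda\widetilde{\nu}_s$ are pairwise distinct for $s$ in a small interval; hence there are uncountably many envelopes, which is (3-$\infty$). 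Finally, since $|\dot\lambda|\le|\beta|$ holds everywhere under creativity, the three hypotheses in (3-i)--(3-$\infty$) are mutually exclusive and exhaustive, so the ``only if'' directions follow automatically from the ``if'' directions just established.

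The principal obstacle is the sign-rigidity step used for (3-ii): showing that a $C^\infty$ unit field whose $\mu$-component is the prescribed $-\cos\theta$ cannot switch between the two admissible values $\pm\sin\theta$ of its $\nu$-component, so that no genuinely new smooth envelope is produced. Away from $\{\sin\theta=0\}$ the sign is locally constant for free; the real work is at a zero $t_0$ of $\sin\theta$, where one compares the one-sided jets of $\widetilde{\nu}=-\cos\theta\,\mu+\varepsilon\sin\theta\,\nu$ and invokes the Frenet relations to conclude that smooth matching at $t_0$ forces the sign $\varepsilon$ to agree on the two sides. By comparison, the bump-function construction for (3-$\infty$) and the passage from the ``if'' to the ``only if'' directions are routine bookkeeping.
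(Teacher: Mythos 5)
Your reduction of the whole theorem to counting smooth unit fields $\widetilde{\nu}=(f-\gamma)/\lambda$ is sound, and for (1), (2), (3-i), (3-$\infty$) and the exclusivity bookkeeping it is the expected argument (note the paper itself gives no proof of Theorem \ref{t1.4}; it is imported from \cite{WN}, and the manipulations in Subsections \ref{subsection2.2}, \ref{subsection2.5}, \ref{subsection2.6} use exactly the equivalence you establish). The computation that condition (2) of Definition \ref{d1.2} is precisely $\dot{\lambda}=-\beta(\widetilde{\nu}\cdot\mu)$, the continuity argument for (3-i), and the bump-rotation construction on an interval where $\beta\equiv 0$ for (3-$\infty$) are all correct.

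The genuine gap is the sign-rigidity step for (3-ii), exactly the point you flag as the principal obstacle: the claim that $C^\infty$-smoothness forces the sign of $\widetilde{\nu}\cdot\nu$ to be globally constant is false in the smooth category, and the one-sided jet comparison you propose only works at zeros of $\sin\theta$ of \emph{finite} order, which the hypotheses of (3-ii) do not guarantee. Concretely, take $I=(-5,5)$, $\gamma(t)=(t,0)$, so $\nu=(0,1)$, $\mu=(-1,0)$, $\beta\equiv -1$ (hence $\{\beta\neq 0\}$ is dense); choose a smooth $b$ with $0\le b<1$, $b\equiv 0$ on $[-1,1]$ and $b>0$ elsewhere, and set $a=\sqrt{1-b^{2}}$, $\lambda(t)=10+\int_{0}^{t}a(s)\,ds$. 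Then $\dot{\lambda}=a$, creativity forces $\widetilde{\nu}\cdot\mu=a$, and the witnesses are exactly the smooth fields $a\mu+b'\nu$ with $b'^{2}=b^{2}$; besides $b'=\pm b$, the function equal to $b$ on $(-5,-1]$, $0$ on $[-1,1]$ and $-b$ on $[1,5)$ (and its negative) is also smooth, so one gets four distinct envelopes although the hypotheses of (3-ii) hold (e.g.\ $|\dot{\lambda}(2)|<|\beta(2)|$). The same sign switch is possible across an isolated zero of $\sin\theta$ of infinite order, such as $b(t)=e^{-1/t^{2}}$. So finite-jet matching cannot close the argument; a complete proof of (3-ii) must control the zero set of $\sin\theta$ (finite order of vanishing, real-analyticity, or a connectedness condition on $\{t:|\dot{\lambda}(t)|<|\beta(t)|\}$), or interpret the count of envelopes differently, and since your ``only if'' directions for (3-i) and (3-$\infty$) are obtained by elimination through (3-ii), this gap propagates to them as well.
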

By the assertion (2) of Theorem \ref{t1.4}, it is reasonable to call
$\widetilde{\nu}\,$ the \textit{creator} for {\color{black}the} envelope $f$ created by
$\mathcal{C}_{(\gamma, \lambda)}$.
On the other hand, it is well known that the evolute
of a {\color{black}regular} curve
{\color{black}without inflection points}
in the Euclidean plane is not only the locus of cent{\color{black}er}s of the curvature,
but also the envelope of its normal lines.
The involute of a curve is the locus of a point on a piece of taut string as the string
is unwrapped from
the curve. Hence, the involute will vary as the {\color{black}fixed} point varies{\color{black},
and} the curve is
the evolute of any of its involutes. Taking the advantage of envelope theory,
P. Giblin and J. Warder introduced the notion of evolutoids which fills
in the gap between the evolute and the original curve
{\color{black}(see }\cite{GW}{\color{black})}. Each member of the evolutoids is defined
as the envelope of the family of lines, such that each line has a constant angle
with the tangent line of the original curve.
Additionally, the pedal and the contrapedal of a {\color{black}frontal}
are defined as locus of the foot of the perpendicular from a given point to tangents
or normals of the original curve respectively.
Analogous to the evolutoids, S. Izumiya and N. Takeuchi introduced the notion of pedaloid in \cite{IT2}, which fills in the gap between the pedal and the contrapedal.
By using the moving frame $\{\nu(t),\mu(t)\}$, the above associated curves of a frontal $\gamma$ are defined as follows.

\begin{definition}[\cite{FT2}]
	Let $\gamma:I\rightarrow\mathbb{R}^2$ be a frontal with the curvature
$(l,\beta)${\color{black}.   Then,}
the {\color{black}\it involute} of $\gamma$ at $t_0$ is defined {\color{black}as follows.}
	\begin{align*}
		\mathcal{I}nv(\gamma,t_0)(t)=\gamma(t)-\bigg(\int_{t_0}^{t}\beta(u)du\bigg)\mu(t).
	\end{align*}
	If there exists a {\color{black}$C^\infty$} function $\alpha:I\rightarrow\mathbb{R}$
such that $\beta(t)=l(t)\alpha(t)$, the {\color{black}\it evolute}
of the frontal $\gamma$ is defined {\color{black}as follows.}
\begin{align*}
		\mathcal{E}v(\gamma)(t)=\gamma(t)-\alpha(t)\nu(t).
	\end{align*}
\end{definition}

\begin{definition}[\cite{IT2}]
	Let $\gamma:I\rightarrow\mathbb{R}^2$ be a frontal with the curvature $(l,\beta)$. If there exists a {\color{black}$C^\infty$}
function $\alpha:I\rightarrow\mathbb{R}$ such that $\beta(t)=l(t)\alpha(t)$,
the {\color{black}\it $\phi$-evolutoid} of $\gamma$ is defined {\color{black}as follows}.
	\begin{align*} \mathcal{E}v_\gamma[\phi](t)=\gamma(t)-\alpha(t)\sin\phi(\cos\phi\mu(t)+\sin\phi\nu(t)),
	\end{align*}
	where $\phi$ is a fixed angle.
For a fixed point $P${\color{black}$\in\R^2$},
the {\color{black}\it $\phi$-pedaloid} of $\gamma$ relative to $P$
is defined as {\color{black}follows.}
	\begin{align*} \mathcal{P}e_{\gamma,P}[\phi](t)=\gamma(t)+[(P-\gamma(t))\cdot(\sin\phi\mu(t)-\cos\phi\nu(t))](\sin\phi\mu(t)-\cos\phi\nu(t)).
	\end{align*}
	{\color{black}Notice that}, if $\phi=0$ or $\phi=\frac{\pi}{2}$, we have
	\begin{center}
		$\begin{array}{ccc}
			&\mathcal{E}v_\gamma[0](t)=\gamma(t), & \mathcal{P}e_{\gamma,P}[0](t)=\mathcal{CP}e_{\gamma,P}(t),\\
			&\mathcal{E}v_\gamma[\frac{\pi}{2}](t)=\mathcal{E}v(\gamma)(t), & \mathcal{P}e_{\gamma,P}[\frac{\pi}{2}](t)=\mathcal{P}e_{\gamma,P}(t),
		\end{array}$
	\end{center}
	where $\mathcal{P}e_{\gamma,P}(t)$ is the {\color{black}\it pedal} of $\gamma$ relative to $P$, and $\mathcal{CP}e_{\gamma,P}(t)$ is known as the
{\color{black}\it contrapedal} of $\gamma$ relative to $P$.
\end{definition}

\begin{example}\label{e1.1}
	\begin{enumerate}
		\item[(1)]
		Let $\gamma: \mathbb{R}_+\to \mathbb{R}^2$
		be the mapping defined by
		$\gamma(t)=\left(0, t\right)$.     Then, it is clear that $\gamma$ is a frontal.
		Let $\lambda: \mathbb{R}_+\to \mathbb{R}_+$ be the positive function
		defined by $\lambda(t)=t$.
		Then, it is easily seen that the origin $(0,0)$ of the plane $\mathbb{R}^2$ is a unique envelope created by the circle family
		$\mathcal{C}_{(\gamma, \lambda)}$, and $(0,0)$ can be regard as an involute of $\gamma(t)$, or a pedal of $\gamma(t)$ relative to the origin.
		\item[(2)]
		The circle family $\{\{(x, y)\in \mathbb{R}^2\, |\, (x-\cos t)^2+(y-\sin t)^2=1\}\}_{t\in \mathbb{R}}$ creates exactly two
		envelopes: $f_1(t)=(0,0)$ and $f_2(t)=(2\cos t, 2\sin t)$. Moreover, we find that $f_2(t)$ is a pedal of itself relative to $f_1(t)$.
	\end{enumerate}
\end{example}
\noindent
{\color{black}For more details on Example \ref{e1.1}, see Section \ref{section3}.}
\par
\medskip
{\color{black}The main result in this paper is the following Theorem \ref{t1.7}.}
\begin{theorem}\label{t1.7}
	Let $\gamma: I\to \mathbb{R}^2$
	be a frontal with Gauss mapping $\nu: I\to S^1$ and let
	$\lambda:I\to \mathbb{R}_+$ be a positive {\color{black}$C^\infty$} function.
	Then, we have the following:
	\begin{enumerate}
		\item[(1)] Suppose that the circle family $\mathcal{C}_{(\gamma, \lambda)}$
creates an envelope $f: I\to \mathbb{R}^2${\color{black}.   T}hen $f(t)$
is a frontal with Gauss mapping $\widetilde{\nu}:I\rightarrow S^1$ and its curvature is
$$
\big(l(t)\pm\dot{\theta}(t),\lambda(t)[l(t)\pm\dot{\theta}(t)]\pm\beta(t)\sin\theta(t)\big),
$$
{\color{black}where $l, \beta: I\to \R$ is the functions defined in the paragraph
just after Definition \ref{definition1} and $\theta: I\to \R$ is the function defined in
Definition \ref{creative}.}
		\item[(2)] Suppose that the set consisting of $t\in I$ satisfying $\beta(t)\ne 0$ and $\frac{d\lambda}{dt}(t)=\pm\beta(t)$ is dense in $I$, and $f(t)$ is the unique envelope created by the circle family $\mathcal{C}_{(\gamma, \lambda)}$ with Gauss mapping $\widetilde{\nu}:I\rightarrow S^1${\color{black}.   T}hen the following four hold.
		\smallskip
		\begin{enumerate}
			\item[(2-i)] $\gamma(t)$ is the evolute of $f(t)$.
			\item[(2-ii)] $\mathcal{C}_{(\gamma,\cos\phi\lambda)}$ creates two envelopes $f_1(t)$ and $f_2(t)$ such that $$f_1(t)=\mathcal{E}v_f[\phi](t),~~~~~~f_2(t)=\mathcal{E}v_f[\pi-\phi](t),$$ where $\phi
{\color{black}\notin\left\{\frac{\pi}{2}+k\pi\; |\; k\in \Z\right\}}$.
			\item[(2-iii)] Let $\gamma_1(t)=\frac{\gamma(t)+P}{2}$ and $\lambda_1(t)=\|\frac{\gamma(t)-P}{2}\|$ where
$P{\color{black}\notin\{\gamma(t)\; |\; t\in I\}}$ is
a fixed point{\color{black}.  T}hen the circle family $C_{(\gamma_1,\lambda_1)}$ creates envelopes $f_1(t)$ and $f_2(t)$, such that
			$$f_1(t)=P,~~~~~~ f_2(t)=\mathcal{CP}e_{f,P}(t).$$
			\item[(2-iv)] Let $\phi$ be a fixed angle and
{\color{black}let}
$P\notin{\color{black}\left\{\mathcal{E}v_f[\phi+\frac{\pi}{2}](t)\; |\; t\in I\right\}}$
be a fixed point{\color{black}.   T}hen
the circle family $C_{(\gamma_{\color{black}2},\lambda_{\color{black}2})}$
creates envelopes $f_1(t)$ and $f_2(t)$ such that
			$$
f_1(t)=P,~~~~~~f_2(t)=\mathcal{P}e_{f,P}[\phi](t),
          $$
			where
			\begin{align*}
				\gamma_{\color{black}2}(t)=&\frac{\gamma(t)+\lambda(t)\sin\phi\big(-\sin\phi\mu(t)+\cos\phi\nu(t)\big)+P}{2},\\
				\lambda_{\color{black}2}(t)=&\frac{\|\gamma(t)+\lambda(t)\sin\phi\big(-\sin\phi\mu(t)+\cos\phi\nu(t)\big)-P\|}{2}.
			\end{align*}
		\end{enumerate}
		\smallskip
			\item[(3)] Suppose that the set of $t\in I$ satisfying $\beta(t)\ne 0$ and $|\frac{d\lambda}{dt}(t)| \leq |\beta(t)|$ is dense in $I$, and the circle family $\mathcal{C}_{(\gamma, \lambda)}$ creates envelopes $f_1(t)$ and $f_2(t)${\color{black}.    Suppose
moreover that
$f_1(t)$ is a constant vector.    T}hen we have
$$
f_2(t)=\mathcal{P}e_{2\gamma(t)-f_1,f_1}(t).
$$
	\end{enumerate}
\end{theorem}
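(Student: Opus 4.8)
The plan is to derive every part from Theorem~\ref{t1.4}: existence of an envelope forces the family to be creative, the creators are the $C^\infty$ maps $\widetilde\nu=-\cos\theta\,\mu\pm\sin\theta\,\nu$ with $\dot\lambda=\cos\theta\,\beta$, and the envelope equals $\gamma+\lambda\widetilde\nu$; thus every computation below is a differentiation of such an expression using the Frenet relations $\dot\nu=l\mu$, $\dot\mu=-l\nu$. For (1), condition~(2) of Definition~\ref{d1.2} together with $f-\gamma=\lambda\widetilde\nu$ and $\lambda>0$ gives $\dot f\cdot\widetilde\nu\equiv0$, so $f$ is a frontal with Gauss mapping $\widetilde\nu$; setting $\widetilde\mu=J(\widetilde\nu)$ and differentiating $\widetilde\nu$ and $f$ as above, the $\widetilde\nu$-components of $\dot{\widetilde\nu}$ and of $\dot f$ cancel and one is left with $\dot{\widetilde\nu}=(l\pm\dot\theta)\widetilde\mu$ and $\dot f=\bigl(\lambda(l\pm\dot\theta)\pm\beta\sin\theta\bigr)\widetilde\mu$, which is exactly the claimed curvature $\bigl(l\pm\dot\theta,\ \lambda(l\pm\dot\theta)\pm\beta\sin\theta\bigr)$.

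For (2) the crucial reduction is that its hypothesis is precisely the uniqueness criterion of Theorem~\ref{t1.4}(3-i) and it trivialises $\theta$: on the dense set $\cos\theta\,\beta=\pm\beta$ with $\beta\ne0$, so $|\cos\theta|\equiv1$ on $I$ by continuity, and since $I$ is connected $\cos\theta$ is a constant $\pm1$ with $\sin\theta\equiv\dot\theta\equiv0$. Taking $\theta\equiv0$ (the case $\theta\equiv\pi$ being symmetric), we get $\dot\lambda=\beta$, the creator $\widetilde\nu=-\mu$, the frame $\widetilde\mu=\nu$, the envelope $f=\gamma-\lambda\mu$, and, by (1), the curvature of $f$ is $(l,\lambda l)$. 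Then (2-i) is immediate: $\widetilde\alpha:=\lambda$ satisfies $\widetilde\beta=\widetilde l\,\widetilde\alpha$, so $\mathcal{E}v(f)=f-\lambda\widetilde\nu=\gamma$.

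For (2-ii)--(2-iv) I would note that (2-iii) is the instance $\phi=0$ of (2-iv) (there $Q(t):=\gamma(t)+\lambda(t)\sin\phi\bigl(-\sin\phi\,\mu(t)+\cos\phi\,\nu(t)\bigr)$ equals $\gamma(t)$ and, by (2-i), $P\notin\{\mathcal{E}v_f[\phi+\tfrac\pi2](t)\}$ becomes $P\notin\{\gamma(t)\}$), and that all three use the same mechanism: if an auxiliary family has at each $t$ the segment $[Q(t),P]$ as a diameter, i.e.\ it is $\mathcal{C}_{(\gamma_\ast,\lambda_\ast)}$ with $\gamma_\ast=\tfrac12(Q+P)$, $\lambda_\ast=\tfrac12\|Q-P\|$, $Q$ a frontal with Gauss mapping $\nu_\ast$, and $P\notin Q(I)$, then $f_1\equiv P$ satisfies both conditions of Definition~\ref{d1.2} trivially, so by Theorem~\ref{t1.4}(2) its creator is $\widetilde\nu_\ast=\tfrac{P-Q}{\|P-Q\|}$; a second creator is the reflection $\widetilde\nu_\ast'=\widetilde\nu_\ast-2(\widetilde\nu_\ast\cdot\nu_\ast)\nu_\ast$, and it yields the envelope $f_2=P-\bigl[(P-Q)\cdot\nu_\ast\bigr]\nu_\ast$. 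It then remains to verify, using $f=\gamma-\lambda\mu$, $\widetilde\nu=-\mu$, $\widetilde\mu=\nu$, $\widetilde\alpha=\lambda$: that this $Q$ equals $\mathcal{E}v_f[\phi+\tfrac\pi2]$ and is a frontal with Gauss mapping $w:=-\sin\phi\,\mu+\cos\phi\,\nu$ (so that $\gamma_2,\lambda_2,\nu_\ast$ are the ones displayed); that, expanding in the orthonormal frame $\{w,J(w)\}$, the above $f_2$ equals $\mathcal{P}e_{f,P}[\phi]$, which for $\phi=0$ reads $\mathcal{CP}e_{f,P}$; and, for (2-ii), that the two creators $-\cos\phi\,\mu\pm\sin\phi\,\nu$ of $\mathcal{C}_{(\gamma,\cos\phi\lambda)}$ (creators since $\tfrac{d}{dt}(\cos\phi\,\lambda)=\cos\phi\,\beta$) give the envelopes $\gamma+\cos\phi\,\lambda(-\cos\phi\,\mu\pm\sin\phi\,\nu)$, which a short expansion identifies with $\mathcal{E}v_f[\phi]$ and $\mathcal{E}v_f[\pi-\phi]$.

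For (3), since $f_1$ is a constant point $A$ and an envelope, $\|A-\gamma(t)\|=\lambda(t)$, so $\widetilde\nu_1:=\tfrac{A-\gamma}{\|A-\gamma\|}$ is a creator; differentiating $\lambda=\|\gamma-A\|$ gives $\dot\lambda=\tfrac{\beta\,(\gamma-A)\cdot\mu}{\|\gamma-A\|}$, whence $|\dot\lambda|\le|\beta|$ holds automatically, and the density of $\{\beta\ne0\}$ in the hypothesis excludes case~(3-$\infty$) of Theorem~\ref{t1.4}, so $f_2=\gamma+\lambda\widetilde\nu_2$ with $\widetilde\nu_2$ forced to be $\widetilde\nu_1$ or its reflection $\widetilde\nu_1-2(\widetilde\nu_1\cdot\nu)\nu$; either way $f_2=A-2[(A-\gamma)\cdot\nu]\nu$. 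Finally $t\mapsto2\gamma(t)-A$ has derivative $2\beta\mu$, hence is a frontal with the same Gauss mapping $\nu$, and its pedal relative to $A$, read off from the definition, is $2\gamma-A+2[(A-\gamma)\cdot\mu]\mu$; expanding both sides in $\{\nu,\mu\}$ shows this equals $f_2$. The computations are elementary; the main things to watch are the sign bookkeeping across the $\pm$ in the creator and the independent orientation choices of the Gauss mappings of the auxiliary frontals, and the one step that is not purely mechanical, namely checking that the new base curves $\gamma_\ast$ and $Q$ used in (2-ii)--(2-iv) really are frontals so that Theorem~\ref{t1.4} and the evolute/evolutoid/pedaloid constructions legitimately apply to them.
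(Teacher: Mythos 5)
Your proposal is correct, and for assertions (1), (2-i) and (2-ii) it is essentially the paper's own proof: reduce everything to Theorem \ref{t1.4}, observe that the uniqueness hypothesis forces $|\cos\theta|\equiv 1$ (so $\widetilde{\nu}=\pm\mu$, $\dot{\theta}\equiv 0$), and identify the envelopes of $\mathcal{C}_{(\gamma,\cos\phi\lambda)}$ with the evolutoids by expanding the two creators $-\cos\phi\,\mu\pm\sin\phi\,\nu$ in the frame $\{\widetilde{\nu},\widetilde{\mu}\}$. Where you genuinely diverge is in (2-iii)/(2-iv)/(3). The paper normalizes $P$ to the origin and solves the envelope conditions algebraically: from $\widetilde f\cdot(\widetilde f-2\gamma_1)=0$ and $\dot{\widetilde f}\cdot(\widetilde f-\gamma_1)=0$ it deduces $\widetilde f\cdot\dot\gamma_1=0$, hence $\widetilde f=\delta\,\nu_{\gamma_1}$ with $\delta\in\{0,\,2\gamma_1\cdot\nu_{\gamma_1}\}$, and then identifies $(2\gamma_1\cdot\nu_{\gamma_1})\nu_{\gamma_1}$ with the pedaloid by the same frame expansion you perform. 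You instead keep $P$ general, note that the constant map $P$ is an envelope whose creator is $(P-Q)/\Vert P-Q\Vert$, and obtain the second envelope by reflecting this creator across the tangent direction (the creative condition only fixes the $\mu_\ast$-component, so the reflection is again a creator), giving $f_2=P-\bigl[(P-Q)\cdot\nu_\ast\bigr]\nu_\ast$ directly; your identification of this with $\mathcal{P}e_{f,P}[\phi]$ (and, for $\phi=0$ resp.\ (3), with $\mathcal{CP}e_{f,P}$ resp.\ $\mathcal{P}e_{2\gamma-f_1,f_1}$) checks out. The trade-off: your creator-reflection argument is constructive, avoids the ``without loss of generality $P=0$'' step, and makes the two-envelope structure transparent, whereas the paper's computation additionally characterizes every envelope of the auxiliary family as pointwise one of the two branches. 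Both arguments share the same small glosses (the pointwise dichotomy ``$f(t)=P$ or $f(t)=$ pedaloid point'' is promoted to a global statement without comment, and the auxiliary centers must be checked to be frontals --- a point you at least flag explicitly), so nothing in your plan would fail where the paper's proof succeeds.
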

\bigskip
This paper is organized as follows. The proof of Theorem \ref{t1.7} is given in Section
\ref{section2}.
In Section \ref{section3},
{\color{black} in order to show how}
Theorem \ref{t1.7} is effectively applicable{\color{black}, }
several examples {\color{black}including the
above (1), (2) of Example \ref{e1.1} are given}.
Finally, in Section \ref{section4},
some applications of Theorem \ref{t1.7} are investigated.

\section{Proof of Theorem \ref{t1.7}}\label{section2}
\subsection{Proof of the assertion (1) of Theorem \ref{t1.7}}
\label{subsection2.1}
By Definition \ref{d1.2} and Theorem \ref{t1.4} (2), it follows that $f:I\rightarrow\mathbb{R}^2$ is a frontal since $\frac{df}{dt}(t)\cdot\widetilde{\nu}(t)=0$ for any $t\in I$. We set
\begin{align*}
	\widetilde{\mu}(t)=J(\widetilde{\nu}(t))=\cos\theta(t)\nu(t)\pm\sin\theta(t)\mu(t).
\end{align*}
Then we have
\begin{align*}
	\frac{d}{dt}\widetilde{\nu}(t)=&\cos\theta(t)[l(t)\pm\frac{d\theta}{dt}(t)]\nu(t)\pm\sin\theta(t)[l(t)\pm\frac{d\theta}{dt}(t)]\mu(t)\\
	=&[l(t)\pm\frac{d\theta}{dt}(t)]\widetilde{\mu}(t).
\end{align*}
On the other hand, we calculate that
\begin{align*}
	\frac{df}{dt}(t)=&\beta(t)\mu(t)+\beta(t)\cos\theta(t)\widetilde{\nu}(t)+\lambda(t)[l(t)\pm\frac{d\theta}{dt}(t)]\widetilde{\mu}(t)\\
	=&\beta(t)\mu(t)+\beta(t)\cos\theta(t)[-\cos\theta(t)\mu(t)\pm\sin\theta(t)\nu(t)]+\lambda(t)[l(t)\pm\frac{d\theta}{dt}(t)]\widetilde{\mu}(t)\\
	=&\pm\beta(t)\sin\theta(t)\widetilde{\mu}(t)+\lambda(t)[l(t)\pm\frac{d\theta}{dt}(t)]\widetilde{\mu}(t)\\
	=&\big(\lambda(t)[l(t)\pm\frac{d\theta}{dt}(t)]\pm\beta(t)\sin\theta(t)\big)\widetilde{\mu}(t).
\end{align*}
Thus, the curvature of $f$ is $\big(l(t)\pm\frac{d\theta}{dt}(t),\lambda(t)[l(t)\pm\frac{d\theta}{dt}(t)]\pm\beta(t)\sin\theta(t)\big)$.
\hfill $\Box$

\subsection{Proof of the assertion (2-i) of Theorem \ref{t1.7}}
\label{subsection2.2}
By Theorem \ref{t1.4} (3-i), $\mathcal{C}_{(\gamma, \lambda)}$
creates a unique envelope if and only if the set consisting of $t\in I$ satisfying
$\frac{d\lambda}{dt}(t)=\pm \beta(t)\ne 0$ is dense in $I$.
Under considering continuity of
the function $t\mapsto \widetilde{\nu}(t)\cdot\mu(t)$, we have
$\widetilde{\nu}(t)\cdot\mu(t)=\pm 1$ for any $t\in I$. Then,
{\color{black}it follows that}
$\mathcal{C}_{(\gamma, \lambda)}$ creates a unique envelope
{\color{black}if and only if}
$\theta(t)=k\pi$ for any $t\in I$.
{\color{black}Thus, by the assertion (1)},
we have $\lambda(t)l_f(t)=\beta_f(t)$.
By Theorem \ref{t1.4} (2), $f(t)$ can be represented by
\begin{align*}
	f(t)=\gamma(t)+\lambda(t)\widetilde{\nu}(t).
\end{align*}
According to the definition of evolutes of frontals, we have
\begin{align*}
	\mathcal{E}v(f)(t)=f(t)-\lambda(t)\widetilde{\nu}(t)=\gamma(t).
\end{align*}
\hfill $\Box$
\subsection{Proof of the assertion (2-ii) of Theorem \ref{t1.7}}
\label{subsection2.3}
By the assertion (2-i), we have $\widetilde{\nu}(t)=-\mu(t)$ and $\frac{d\lambda}{dt}(t)=\beta(t)$, or $\widetilde{\nu}(t)=\mu(t)$ and $\frac{d\lambda}{dt}(t)=-\beta(t)$. If $\widetilde{\nu}(t)=-\mu(t)$, $\frac{d\lambda}{dt}(t)=\beta(t)$ and $\phi\neq\frac{\pi}{2}+k\pi$, consider the circle family $\mathcal{C}_{(\gamma,\cos\phi\lambda)}$
{\color{black}.   It is easily seen that}
$\mathcal{C}_{(\gamma,\cos\phi\lambda)}$ is creative and the following equalities hold.
\begin{align*}
	\cos\phi\frac{d\lambda}{dt}(t)=\cos\phi\beta(t),~~~ \widetilde{\nu}_{f_1}(t)=-\cos\phi\mu(t)-\sin\phi\nu(t),~~~
	\widetilde{\nu}_{f_2}(t)=-\cos\phi\mu(t)+\sin\phi\nu(t).
\end{align*}
Since $f_1(t)=\gamma(t)+\lambda(t)\cos\phi\widetilde{\nu}_{f_1}(t)$ and $f_2(t)=\gamma(t)+\lambda(t)\cos\phi\widetilde{\nu}_{f_2}(t)$, we have
\begin{align*}
	f_1(t)=&\gamma(t)+\lambda(t)\cos\phi(-\cos\phi\mu(t)-\sin\phi\nu(t))\\
	=&f(t)-\lambda(t)\widetilde{\nu}(t)+\lambda(t)\cos\phi(-\cos\phi\mu(t)-\sin\phi\nu(t))\\
	=&f(t)-\lambda(t)\widetilde{\nu}(t)+\lambda(t)\cos\phi(\cos\phi\widetilde{\nu}(t)-\sin\phi\widetilde{\mu}(t))\\
	=&f(t)-\lambda(t)\sin\phi(\cos\phi\widetilde{\mu}(t)+\sin\phi\widetilde{\nu}(t))
\end{align*}
and
\begin{align*}
	f_2(t)=&\gamma(t)+\lambda(t)\cos\phi(-\cos\phi\mu(t)+\sin\phi\nu(t))\\
	=&f(t)-\lambda(t)\widetilde{\nu}(t)+\lambda(t)\cos\phi(-\cos\phi\mu(t)+\sin\phi\nu(t))\\
	=&f(t)-\lambda(t)\widetilde{\nu}(t)+\lambda(t)\cos\phi(\cos\phi\widetilde{\nu}(t)+\sin\phi\widetilde{\mu}(t))\\
	=&f(t)-\lambda(t)\sin\phi(-\cos\phi\widetilde{\mu}(t)+\sin\phi\widetilde{\nu}(t))\\
	=&f(t)-\lambda(t)\sin(\pi-\phi)(\cos(\pi-\phi)\widetilde{\mu}(t)+\sin(\pi-\phi)\widetilde{\nu}(t)).
\end{align*}
By the assumption, it follows that $\lambda(t)l_f(t)=\beta_f(t)$ for any $t\in I$. According to the definition of evolutoids of frontals, we obtain
$$f_1(t)=\mathcal{E}v_f[\phi](t),~~~~~~f_2(t)=\mathcal{E}v_f[\pi-\phi](t).$$
For the case of $\widetilde{\nu}(t)=\mu(t)$ and $\frac{d\lambda}{dt}(t)=-\beta(t)$, we can prove it in the same way.
\hfill $\Box$
\subsection{Proof of the assertion (2-iii) of Theorem \ref{t1.7}}
\label{subsection2.4}
The proof of the assertion (2-iv) given in Subsection 2.5
proves the assertion (2-iii) as well.
\hfill $\Box$
\subsection{Proof of the assertion (2-iv) of Theorem \ref{t1.7}}
\label{subsection2.5}
By the assumption, it follows that
\begin{align*}
	\big(\gamma_1(t)-P\big)\cdot\big(\gamma_1(t)-P\big)=\lambda_1^2(t)
\end{align*}
always holds for the circle family $\mathcal{C}_{(\gamma_1, \lambda_1)}$.
{\color{black}   T}hen $\mathcal{C}_{(\gamma_1, \lambda_1)}$ is creative and $P$
is an envelope of $\mathcal{C}_{(\gamma_1, \lambda_1)}$.
Moreover, by the proof of {\color{black}the} assertion (2-ii), we have
\begin{align*}
	\mathcal{E}v_f[\phi+\frac{\pi}{2}](t)=\gamma(t)+\lambda(t)\sin\phi\big(-\sin\phi\mu(t)+\cos\phi\nu(t)\big).
\end{align*}
As $P\notin\mathcal{E}v_f[\phi+\frac{\pi}{2}](t)$, it ensures that $\lambda_1(t)>0$ for any $t\in I.$
Without losing generality, we {\color{black}may} choose the origin as $P$.
Then the envelope $\widetilde{f}(t)$ of $\mathcal{C}_{(\gamma_1,\lambda_1)}$ satisfies
\begin{align*}
	\big(\widetilde{f}(t)-\gamma_1(t)\big)\cdot\big(\widetilde{f}(t)-\gamma_1(t)\big)=\gamma_1(t)\cdot \gamma_1(t).
\end{align*}
{\color{black}Just from this equation,}
we have
\begin{align}\label{eq2.1}
	\widetilde{f}(t)\cdot\big(\widetilde{f}(t)-2\gamma_1(t)\big)=0.
\end{align}
{\color{black}In addition, s}ince $\widetilde{f}(t)$ is the envelope,
the following equality holds.
\begin{align}\label{eq2.2}
	\frac{d\widetilde{f}}{dt}(t)\cdot\big(\widetilde{f}(t)-\gamma_1(t)\big)=0.
\end{align}
{\color{black}From (\ref{eq2.2}) and differentiating (\ref{eq2.1})},
we obtain that
\begin{align*}
	\widetilde{f}(t)\cdot\frac{d\gamma_1}{dt}(t)=0.
\end{align*}
{\color{black}Since}
$\gamma_1(t)$ is a frontal with the Gauss mapping $\nu_{\gamma_1}:I\rightarrow S^1$ defined by $$\nu_{\gamma_1}(t)=-\sin\phi\mu(t)+\cos\phi\nu(t),$$
{\color{black}it follows that}
there exists a
{\color{black}$C^\infty$}
function $\delta(t)$ such that $\widetilde{f}(t)=\delta(t)\nu_{\gamma_1}(t)$.
{\color{black}This implies}
that $\delta(t)=0$ or $\delta(t)=2\gamma_1(t)\cdot\nu_{\gamma_1}(t)$ since $\widetilde{f}(t)\cdot\big(\widetilde{f}(t)-2\gamma_1(t)\big)=0$. This means
\begin{align*}
\widetilde{f}(t)=0~~~~~ \mbox{or} ~~~~~ \widetilde{f}(t)=\big(2\gamma_1(t)\cdot\nu_{\gamma_1}(t)\big)\nu_{\gamma_1}(t).
\end{align*}

On the other hand, by the proof of the assertion (2-i), we have $\widetilde{\nu}(t)=\pm\mu(t)$ and $\widetilde{\mu}(t)=\mp\nu(t).$
{\color{black}Here, two double signs $\pm, \mp$ should be read in same order}.
If $\widetilde{\nu}(t)=-\mu(t)$ and $\widetilde{\mu}(t)=\nu(t),$ we have the following:
\begin{align*}
	&\big(2\gamma_1(t)\cdot\nu_{\gamma_1}(t)\big)\nu_{\gamma_1}(t)\\
	=&\big[\big(\gamma(t)+\lambda(t)\sin\phi(-\sin\phi\mu(t)+\cos\phi\nu(t))\big)\cdot(-\sin\phi\mu(t)+\cos\phi\nu(t))\big](-\sin\phi\mu(t)+\cos\phi\nu(t))\\
	=&\big[\gamma(t)\cdot(\sin\phi\widetilde{\nu}(t)+\cos\phi\widetilde{\mu}(t))\big](\sin\phi\widetilde{\nu}(t)+\cos\phi\widetilde{\mu}(t))+\lambda(t)\sin\phi(\sin\phi\widetilde{\nu}(t)+\cos\phi\widetilde{\mu}(t))\\
	=&\big[(f(t)-\lambda(t)\widetilde{\nu}(t))\cdot(\sin\phi\widetilde{\nu}(t)+\cos\phi\widetilde{\mu}(t))\big](\sin\phi\widetilde{\nu}(t)+\cos\phi\widetilde{\mu}(t))+\lambda(t)\sin\phi(\sin\phi\widetilde{\nu}(t)+\cos\phi\widetilde{\mu}(t))\\
	=&\big[f(t)\cdot(\sin\phi\widetilde{\nu}(t)+\cos\phi\widetilde{\mu}(t))\big](\sin\phi\widetilde{\nu}(t)+\cos\phi\widetilde{\mu}(t))\\
	=&\mathcal{P}e_{f,0}[\phi](t).
\end{align*}
{\color{black}In the case}
$\widetilde{\nu}(t)=\mu(t)$ and $\widetilde{\mu}(t)=-\nu(t)$,
{\color{black}it is easily seen that}
$\big(2\gamma_1(t)\cdot\nu_{\gamma_1}(t)\big)\nu_{\gamma_1}(t)=\mathcal{P}e_{f,0}[\phi](t)$ {\color{black}can be shown} in the same way.
\hfill $\Box$

\subsection{Proof of the assertion (3) of Theorem \ref{t1.7}}
\label{subsection2.6}
Under considering continuity of the functions $\frac{d\lambda}{dt}(t)$ and $\beta(t)$,
{\color{black}it is easily seen}
that $\mathcal{C}_{(\gamma, \lambda)}$ is creative.
By Definition \ref{d1.2}, {\color{black}the assertion \lq\lq}an envelope
of a circle family is a point{\color{black}\rq\rq\; is equivalent to the assertion
\lq\lq}all circles of the family pass through the point{\color{black}\rq\rq}.
Without losing generality, {\color{black}we may} choose the origin
as the constant vector $f_1${\color{black}.
T}hen the envelope $f(t)$ of $\mathcal{C}_{(\gamma, \lambda)}$ satisfies
\begin{align*}
	\big(f(t)-\gamma(t)\big)\cdot\big(f(t)-\gamma(t)\big)=\gamma(t)\cdot\gamma(t).
\end{align*}
{\color{black}This implies}
\begin{align*}
	f(t)\cdot\big(f(t)-2\gamma(t)\big)=0.
\end{align*}
{\color{black}On the other hand, s}ince $f(t)$ is the envelope,
{\color{black}it follows}
$$\frac{df}{dt}(t)\cdot\big(f(t)-\gamma(t)\big)=0.$$
{\color{black}Thus, by the similar way as}
the proof of assertion (2-iv), we have $f(t)=0$ or $f(t)=\big(2\gamma(t)\cdot\nu(t)\big)\nu(t).$
{\color{black}Therefore, by} the assumption $f_1(t)=0$, we have
$$f_2(t)=\big(2\gamma(t)\cdot\nu(t)\big)\nu(t)=\mathcal{P}e_{2\gamma(t),0}(t).$$
\hfill $\Box$
\section{Examples}\label{section3}
\begin{example}
	We examine (1) of Example \ref{e1.1} by applying Theorem \ref{t1.7}. In (1) of Example \ref{e1.1}, $\gamma: \mathbb{R}_+\to \mathbb{R}^2$
	is given by $\gamma(t)=\left(0, t\right)$.
	Thus, if we define the unit vector $\nu(t)=(1, 0)$,
	$\nu: \mathbb{R}_+\to S^1$ gives the Gauss mapping of $\gamma$.
	By definition, $\mu(t)=(0,-1)$ and thus we have
	$$\beta(t)=\frac{d\gamma}{d t}(t)\cdot\mu(t)=-1.$$
	On the other hand, the radius function
	$\lambda: \mathbb{R}_+\to \mathbb{R}_+$ has the form
	$\lambda(t)=t$ in this example.
	Thus, the set consisting of $t\in\mathbb{R}_+$ satisfying $\beta(t)\ne 0$  and $\frac{d\lambda}{dt}(t)=-\beta(t)$ is dense in $\mathbb{R}_+$. By (3-i) of Theorem \ref{t1.4}, $(0,0)$ is the unique envelope of the circle family $\mathcal{C}_{(\gamma, \lambda)}$ with the Gauss mapping $\widetilde{\nu}$ defined by $\widetilde{\nu}=(0,-1)$. Thus, by (2-i) of Theorem \ref{t1.7} and taking $\alpha_{f}(t)=\lambda(t)$ such that $\alpha_{f}(t)l_{f}(t)=\beta_{f}(t)$, $\gamma(t)$ is an evolute of $(0,0)$.
	Moreover, we can use the assertion (3) of Theorem \ref{t1.7} to interpret (1) of Example \ref{e1.1}, in which we have $f_1(t)=f_2(t)=(0,0)$ and $\gamma(t)$ represents the same curve as $2\gamma(t)$. Then $(0,0)$ is also a pedal of $\gamma(t)$ relative to the origin.
\end{example}

\begin{example}
	Theorem \ref{t1.7} can be applied also to (2) of Example \ref{e1.1} as follows. In this example, $\gamma(t)=(\cos t,\sin t)$ and $\lambda(t)=1.$ Thus, we may take $\nu(t)=(\cos t,\sin t)$ and $\mu(t)=(-\sin t, \cos t)$. We have $\beta(t)=\frac{d\gamma}{dt}(t)\cdot\mu(t)=1$. Since the
	radius function $\lambda$ is constant function, the created condition
	\[
	\frac{d\lambda}{dt}(t) =
	-\beta(t)\left(\widetilde{\nu}(t)\cdot \mu(t)\right)
	\]
	becomes simply
	\[
	0 =
	-\left(\widetilde{\nu}(t)\cdot (-\sin t, \cos t)\right)
	\]
	in this case. Thus, for any $t\in\mathbb{R}$,
	the creat{\color{black}ive} condition is satisfied if and only if
	$\widetilde{\nu}(t)=\pm\nu(t)$. Hence, the parametrization of the created envelope is
	\[
	f(t)=\gamma(t)+\lambda(t)\widetilde{\nu}(t)
	=(\cos t\pm\cos t,\sin t\pm\sin t).
	\]
	Since the set of $t\in I$ satisfying $|\frac{d\lambda}{dt}(t)| \leq |\beta(t)|$ is dense in $\mathbb{R}$, by (4) of Theorem \ref{t1.7}, $f_2(t)=(2\cos t, 2\sin t)$ is a pedal of itself relative to the origin.
\end{example}

{\color{black}More} examples are provided to demonstrate Theorem \ref{t1.7} as follows.
\begin{example}\label{e3.1}
	Let $\gamma:\mathbb{R}\rightarrow\mathbb{R}^2$ be the mapping defined by
$\gamma(t)={\color{black}\left(-4t^3,\; 3t^2+\frac{1}{2}\right)}$
and let $\lambda:\mathbb{R}\rightarrow\mathbb{R}$ be the function defined by $\lambda(t)=\frac{1}{2}(1+4t^2)^{\frac{3}{2}}$. Defining the mapping $\nu:\mathbb{R}\rightarrow S^1$ by $\nu(t)=\frac{1}{\sqrt{1+4t^2}}(1,2t)$ clarifies that the mapping $\gamma$ is a frontal. Then we have
	\begin{align*}
		\mu(t)=J(\nu(t))=\frac{1}{\sqrt{1+4t^2}}(-2t,1),~~~~l(t)=\frac{2}{1+4t^2},~~~~\beta(t)=6t\sqrt{1+4t^2}.
	\end{align*}
	The mapping $\theta:\mathbb{R}\rightarrow\mathbb{R}$ defined by $\theta(t)=0$ satisfies the creative condition $\frac{d\lambda}{dt}(t)=\cos\theta(t)\beta(t)$.
{\color{black}   Thus, we have}
	\begin{align*}
		\widetilde{\nu}(t)=-\cos\theta(t)\mu(t)\pm\sin\theta(t)\nu(t)=-\frac{1}{\sqrt{1+4t^2}}(-2t,1).
	\end{align*}
	By the assertion (2) of Theorem \ref{t1.4}, we obtain the unique envelope of $\mathcal{C}_{(\gamma, \lambda)}$ as follows.
	\begin{align*}
		f(t)=\gamma(t)+\lambda(t)\widetilde{\nu}(t)=(t,t^2).
	\end{align*}
	Since the set consisting of $t\in I$ satisfying $\beta(t)\ne 0$  and $\frac{d\lambda}{dt}(t)=\pm \beta(t)$ is dense in $\mathbb{R}$, by the assertion (2-i) of Theorem \ref{t1.7}, $\gamma(t)$ is the evolute of $f(t)$.\par
	On the other hand, by the assertion (1) of Theorem \ref{t1.7}, $f:\mathbb{R}\rightarrow\mathbb{R}^2$ is a frontal with the Gauss mapping $\widetilde{\nu}$ and
the curvature {\color{black}is
$\left(\frac{2}{1+4t^2},\sqrt{1+4t^2}\right)$}.
Thus, the evolute of $f$ is parametrized as follows.
	\begin{align*}
		\mathcal{E}v(f)(t)=(t,t^2)+\frac{(1+4t^2)^{\frac{3}{2}}}{2}\cdot\frac{1}{\sqrt{1+4t^2}}(-2t,1)={\color{black}\left(-4t^3,3t^2+\frac{1}{2}\right)}.
	\end{align*}
	It also follows that $\gamma(t)$ is the evolute of $f(t)$. The circle family $\mathcal{C}_{(\gamma, \lambda)}$ and its envelope $f(t)$ are
	depicted in Figure 3.
	\begin{figure}[htbp]
		\begin{center}
			\small
			\begin{tabular}{cc}
				\includegraphics[width=70mm]{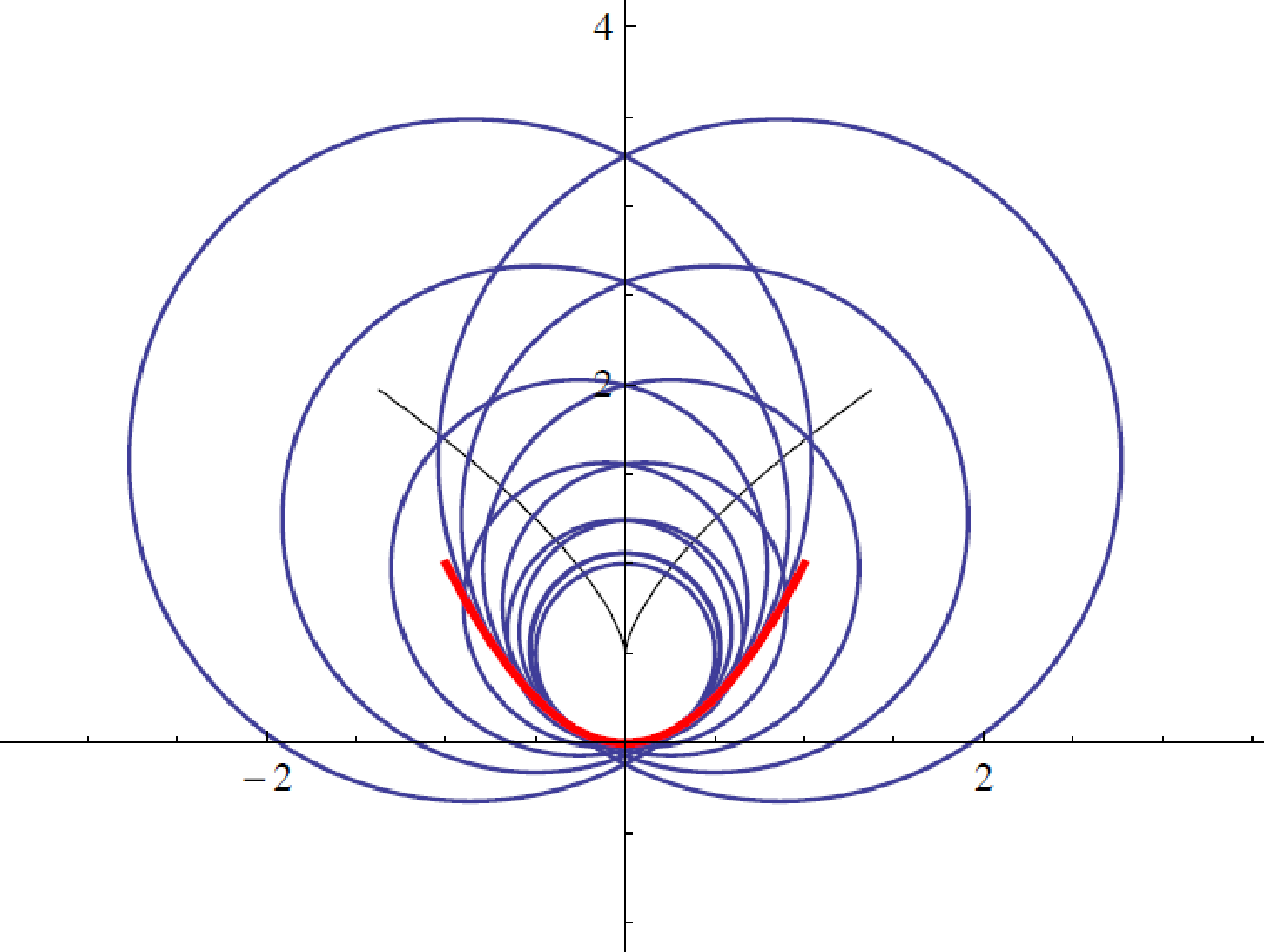}\\
				FIGURE 3. The circle family $\mathcal{C}_{(\gamma, \lambda)}$ , the loci of centers $\gamma(t)$ (thin) and the envelope $f(t)$ (thick).
			\end{tabular}
		\end{center}
	\end{figure}
\end{example}


\begin{example}
	Let $\gamma(t)={\color{black}\left(-4t^3,3t^2+\frac{1}{2}\right)}$ and $\lambda(t)=\frac{1}{2}(1+4t^2)^{\frac{3}{2}}$ which are {\color{black}the} same as
{\color{black}$\gamma, \lambda$ in} Example \ref{e3.1}. Then we have
	\begin{align*}
		&\nu(t)=\frac{1}{\sqrt{1+4t^2}}(1,2t),~~~~~~l(t)=\frac{2}{1+4t^2},\\
		&\mu(t)=\frac{1}{\sqrt{1+4t^2}}(-2t,1),~~~~~~\beta(t)=6t\sqrt{1+4t^2}.
	\end{align*}
	By Example \ref{e3.1}, the circle family $\mathcal{C}_{(\gamma,\lambda)}$ creates only one envelope $f(t)=(t,t^2)$ and the curvature of $f$ is
${\color{black}\left(\frac{2}{1+4t^2},\sqrt{1+4t^2}\right)}$.
	We consider the circle family
$\mathcal{C}_{(\gamma,\cos\frac{\pi}{4}\lambda)}${\color{black}.
B}y the assertion (3-ii) of Theorem \ref{t1.4},
$C_{(\gamma,\cos\frac{\pi}{4}\lambda)}$ creates
two envelopes $f_1(t)$ and $f_2(t)$. {\color{black}By calculations, we have}
	\begin{align*}
		\widetilde{\nu}_{f_1}(t)=-\frac{1}{\sqrt{2}\sqrt{1+4t^2}}(1-2t,1+2t),~~~~
		\widetilde{\nu}_{f_2}(t)=\frac{1}{\sqrt{2}\sqrt{1+4t^2}}(1+2t,-1+2t).
	\end{align*}
	Therefore, we have
	\begin{align*}
		f_1(t)=
{\color{black}\left(-2t^3-t^2+\frac{t}{2}-\frac{1}{4},\;
-2t^3+2t^2-\frac{t}{2}+\frac{1}{4}\right)},~~~~
		f_2(t)={\color{black}\left(-2t^3+t^2+\frac{t}{2}+\frac{1}{4},\;
2t^3+2t^2+\frac{t}{2}+\frac{1}{4}\right)}.
	\end{align*}
	Since the set consisting of $t\in\mathbb{R}$ satisfying $\beta(t)\ne 0$  and $\frac{d\lambda}{dt}(t)=\pm \beta(t)$ is dense in $\mathbb{R}$, by the assertion (2-ii) of Theorem \ref{t1.7}, we obtain that $f_1(t)=\mathcal{E}v_f[\frac{\pi}{4}](t)$ and $f_2(t)=\mathcal{E}v_f[\frac{3\pi}{4}](t)$. It can also be examined that $f_1(t)=\mathcal{E}v_f[\frac{\pi}{4}](t)$ and $f_2(t)=\mathcal{E}v_f[\frac{3\pi}{4}](t)$
by the definition of frontal. The circle family $\mathcal{C}_{(\gamma,\cos\frac{\pi}{4}\lambda)}$ and its envelopes are depicted in Figure 4.
	\begin{figure}[htbp]
		\begin{center}
			\small
			\begin{tabular}{cc}
				\includegraphics[width=75mm]{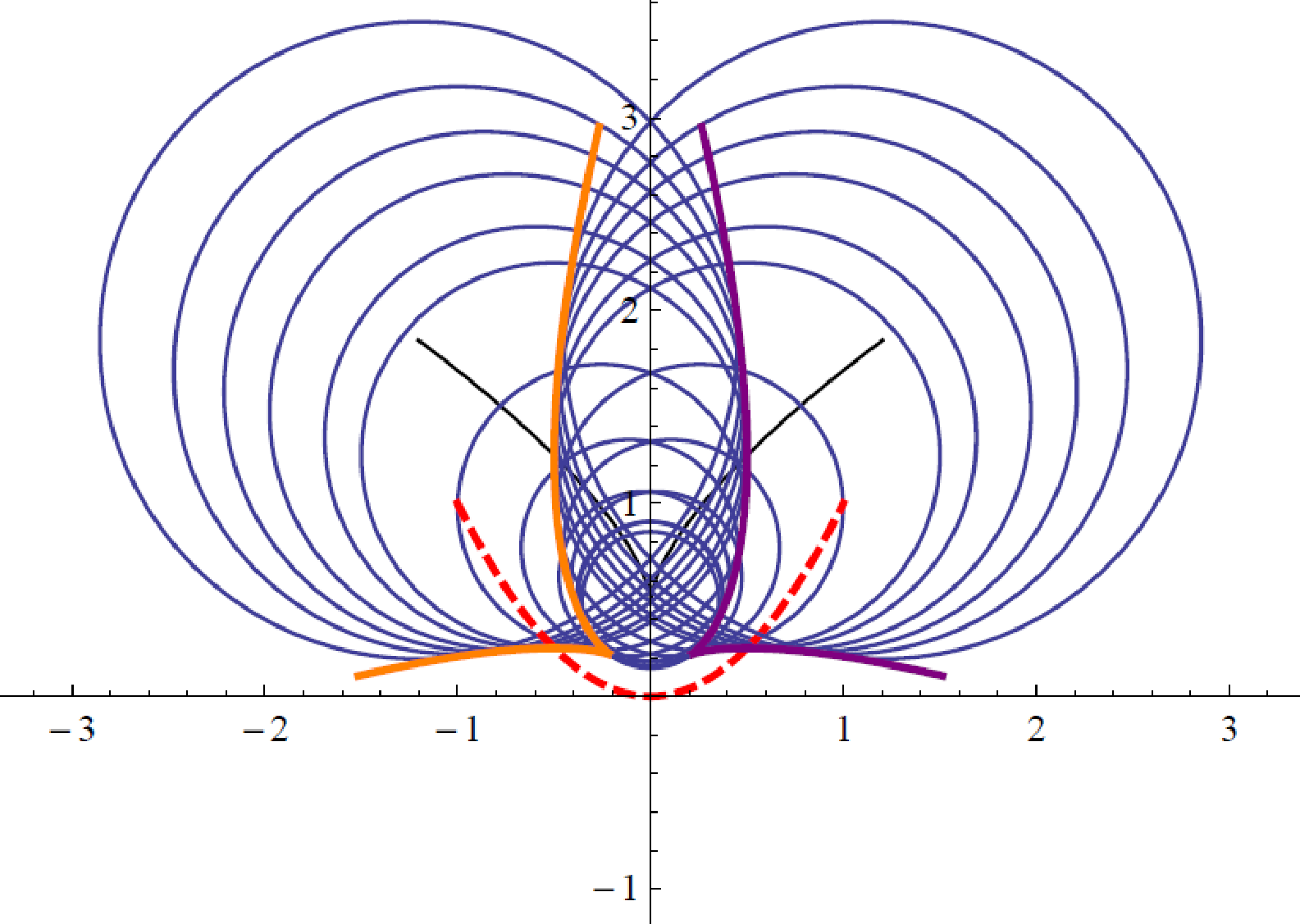} \\
				FIGURE 4. The circle family $\mathcal{C}_{(\gamma,\cos\frac{\pi}{4}\lambda)}$, the loci of centers $\gamma(t)$ (thin), the curve $f(t)$ (dashed)\\
 and the envelopes $f_1(t)$ (left thick), $f_2(t)$ (right thick).
			\end{tabular}
		\end{center}
	\end{figure}
\end{example}

\begin{example}
	Let $\gamma:\mathbb{R}_{+}\rightarrow\mathbb{R}^2$ be the mapping defined by $\gamma(t)=(-4t^3,3t^2)$ and let $\lambda:\mathbb{R}_{+}\rightarrow\mathbb{R}_{+}$ be the function defined by $\lambda(t)=\frac{(1+4t^2)^{\frac{3}{2}}}{2}$.
	We calculate that
	\begin{align*}
		&\nu(t)=\frac{1}{\sqrt{1+4t^2}}(1,2t),~~~~~~l(t)=\frac{2}{1+4t^2},\\
		&\mu(t)=\frac{1}{\sqrt{1+4t^2}}(-2t,1),~~~~~~\beta(t)=6t\sqrt{1+4t^2}.
	\end{align*}
By similar analysis {\color{black}as the one given in} Example \ref{e3.1},
{\color{black}it is relatively easy to show} that the circle family $\mathcal{C}_{(\gamma,\lambda)}$ creates {\color{black}the} unique envelope $f(t)={\color{black}\left(t,t^2-\frac{1}{2}\right)}$.
	We consider the circle family $C_{(\gamma_1,\lambda_1)}$ where
\begin{align*}
		\gamma_1(t)=\frac{\gamma(t)}{2},~~~~\lambda_1(t)=t^2\sqrt{4t^2+\frac{9}{4}}=
{\color{black}\left\Vert\frac{\gamma(t)}{2}\right\Vert}.
\end{align*}
	By the assertion (3-ii{\color{black})} of Theorem \ref{t1.4}, the circle family $C_{(\gamma_1,\lambda_1)}$ creates two envelopes $f_1(t)$ and $f_2(t)$. We calculate that
	\begin{align*}
		\widetilde{\nu}_{f_1}(t)={\color{black}\left(\frac{4t}{\sqrt{9+16t^2}},-\frac{3}{\sqrt{9+16t^2}}\right)},~~~~
		\widetilde{\nu}_{f_2}(t)={\color{black}\left(\frac{8t+16t^3}{(1+4t^2)\sqrt{9+16t^2}},-\frac{3+4t^2}{(1+4t^2)\sqrt{9+16t^2}}\right)}.
	\end{align*}
	Then, the envelopes of $C_{(\gamma_1,\lambda_1)}$ are parametrized as follows.
	\begin{align*}
		f_1(t)=\gamma_1(t)+\lambda_1(t)\widetilde{\nu}_{f_1}(t)=(0,0),~~~~\;
		f_2(t)=\gamma_1(t)+\lambda_1(t)\widetilde{\nu}_{f_2}(t)=
{\color{black}\left(\frac{2t^3}{1+4t^2},\frac{4t^4}{1+4t^2}\right)}.
	\end{align*}
	Since $f_1(t)=(0,0)$ and the set consisting of $t\in\mathbb{R}_{+}$ satisfying $\beta(t)\ne 0$  and $\frac{d\lambda}{dt}(t)=\pm \beta(t)$ is dense in $\mathbb{R}_{+}$, by the assertion (2-iii) of Theorem \ref{t1.7}, we have $f_2(t)=\mathcal{CP}e_{f,0}(t)$. On the other hand, by the definition of contrapedals of frontals,
	we can examine that $f_2(t)$ is the contrapedal curve of $f(t)$ relative to the origin (see Figure 5).\par
	In order to illustrate the assertion (2-iv) of Theorem \ref{t1.7}, we consider another circle family $C_{(\gamma_2,\lambda_2)}$ where
	\begin{align*}
		\gamma_2(t)=&\frac{\gamma(t)+\lambda(t)\sin\frac{\pi}{4}\big(-\sin\frac{\pi}{4}\mu(t)+\cos\frac{\pi}{4}\nu(t)\big)}{2}\\
		=&
{\color{black}\left(-t^3+\frac{t^2}{2}+\frac{t}{4}+\frac{1}{8},\; t^3+t^2+\frac{t}{4}
-\frac{1}{8}\right)},\\
		\lambda_2(t)=&
{\color{black}\left\Vert\frac{\gamma(t)+\lambda(t)\sin\frac{\pi}{4}
\big(-\sin\frac{\pi}{4}\mu(t)+\cos\frac{\pi}{4}\nu(t)\big)}{2}\right\Vert}\\
		=&\frac{\sqrt{2}}{8}\sqrt{(1-4t+8t^2)(1+4t+8t^2+8t^3+8t^4)}.
	\end{align*}
	By calculation, we have
	\begin{align*}
		\mu_{\gamma_2}(t)=&\frac{1}{\sqrt{2+8t^2}}(1-2t,1+2t),~~~~\nu_{\gamma_2}(t)=\frac{1}{\sqrt{2+8t^2}}(1+2t,2t-1),\\
		\beta_{\gamma_2}(t)=&\frac{\sqrt{2+8t^2}(1+6t)}{4},~~~~\cos\theta(t)=\frac{2t^2(3+2t+8t^2)}{\sqrt{(1+4t^2)(1+8t^3+40t^4+32t^5+64t^6)}}.
	\end{align*}
	By the assertion (3-ii{\color{black})} of Theorem \ref{t1.4}, the circle family $C_{(\gamma_2,\lambda_2)}$ creates two envelopes $f_3(t)$ and $f_4(t)$. And we calculate that
	\begin{align*}
		\widetilde{\nu}_{f_3}(t)=&\bigg(\frac{-1-2t-4t^2+8t^3}{\sqrt{2}\sqrt{1+8t^3+40t^4+32t^5+64t^6}},-\frac{-1+2t+8t^2+8t^3}{\sqrt{2}\sqrt{1+8t^3+40t^4+32t^5+64t^6}}\bigg),\\
		\widetilde{\nu}_{f_4}(t)=&\bigg(\frac{1+2t-4t^2+16t^3+32t^5}{\sqrt{2}(1+4t^2)\sqrt{1+8t^3+40t^4+32t^5+64t^6}},-\frac{1-2t+8t^2+16t^3+16t^4+32t^5}{\sqrt{2}(1+4t^2)\sqrt{1+8t^3+40t^4+32t^5+64t^6}}\bigg).
	\end{align*}
	Therefore, the envelopes of $C_{(\gamma_2,\lambda_2)}$ are parametrized as follows.
	\begin{align*}
		f_3(t)=(0,0),~~~~
		f_4(t)=
{\color{black}\left(
\frac{1+2t+2t^2+8t^3+8t^4}{4+16t^2},\frac{-1+2t-2t^2+8t^4}{4+16t^2}\right)}.
	\end{align*}
	Since $f_3(t)=(0,0)$ and the set consisting of $t\in\mathbb{R}_{+}$ satisfying $\beta(t)\ne 0$  and $\frac{d\lambda}{dt}(t)=\pm \beta(t)$ is dense in $\mathbb{R}_{+}$, by the assertion (2-iv) of Theorem \ref{t1.7},   {\color{black}it follows that} $f_4(t)=\mathcal{P}e_{f,\bm{0}}[\frac{\pi}{4}](t)$. Moreover,
{\color{black}it is not difficult to show}
that $f_4(t)$ is the $\frac{\pi}{4}$-pedaloid of $f(t)$ relative to the origin from the definition of pedaioid of frontals (see Figure 6).
	\begin{figure}[htbp]
		\begin{center}
			\small
			\begin{tabular}{cc}
				\includegraphics[width=70mm]{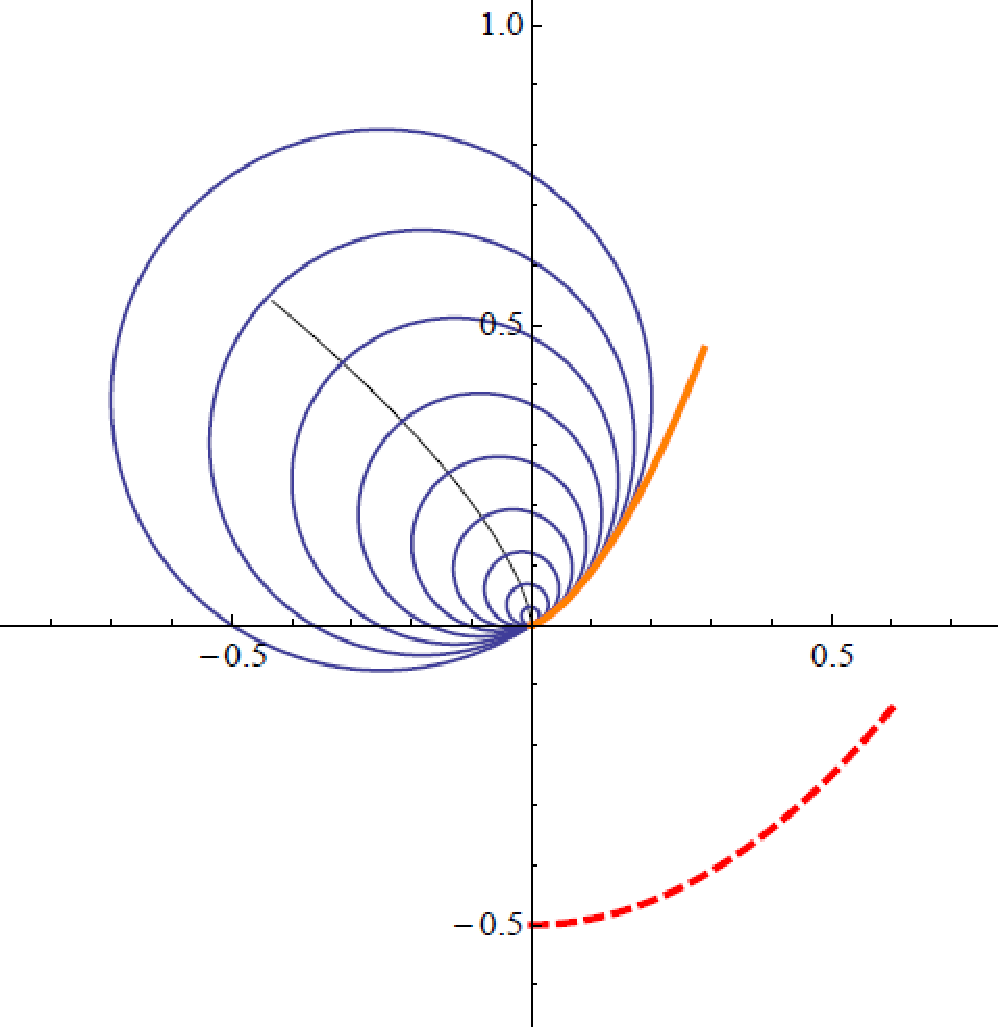} \\
				FIGURE 5. The circle family $C_{(\gamma_1,\lambda_1)}$, the loci of centers $\gamma_1(t)$ (thin),\\ the envelope $f_2(t)$ (thick) and the curve $f(t)$ (dashed).
			\end{tabular}
		\end{center}
	\end{figure}
	\begin{figure}[htbp]
		\begin{center}
			\small
			\begin{tabular}{cc}
				\includegraphics[width=70mm]{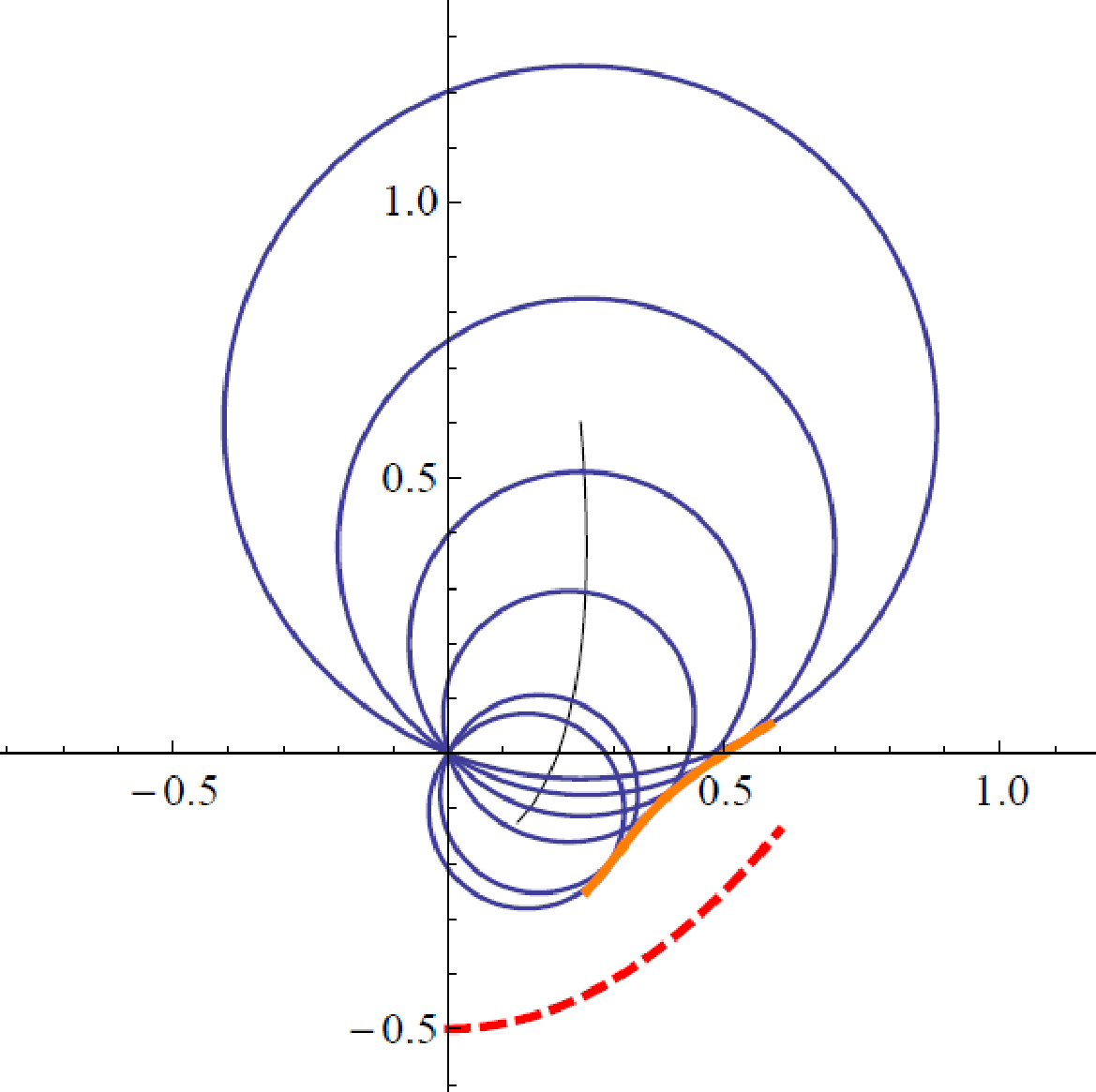} \\
				FIGURE 6. The circle family $C_{(\gamma_2,\lambda_2)}$, the loci of centers $\gamma_2(t)$ (thin),\\ the envelope $f_4(t)$ (thick) and the curve $f(t)$ (dashed).
			\end{tabular}
		\end{center}
	\end{figure}
\end{example}

\begin{example}
	Let $\gamma:\mathbb{R}_{+}\rightarrow\mathbb{R}^2$ be the mapping defined by
$\gamma(t)={\color{black}\left(
\frac{t^3}{6},\frac{t^6}{12}
\right)}$ and let $\lambda:\mathbb{R}_{+}\rightarrow\mathbb{R}_{+}$ be the function defined by $\lambda(t)=\frac{t^3\sqrt{4+t^6}}{12}$. By calculations, we obtain that
	\begin{align*}
		\mu(t)=
{\color{black}\left(-\frac{1}{\sqrt{1+t^6}},-\frac{t^3}{\sqrt{1+t^6}}\right)},~~~~\beta(t)=-\frac{t^2\sqrt{1+t^6}}{2},~~~~
		\nu(t)={\color{black}\left(-\frac{t^3}{\sqrt{1+t^6}},\frac{1}{\sqrt{1+t^6}}\right)}.
	\end{align*}
	Therefore, the function $\cos\theta(t)$ satisfying
	\begin{align*}
		\frac{d\lambda}{dt}(t)=\cos\theta(t)\beta(t)
	\end{align*}
	exists and it must have the form $\cos\theta(t)=-\frac{(2+t^6)}{\sqrt{1+t^6}\sqrt{4+t^6}}.$
	By (2) of Theorem \ref{t1.4}, we have
	\begin{align*}
		\widetilde{\nu}_1(t)=&
{\color{black}\left(-\frac{2}{\sqrt{4+t^6}},-\frac{t^3}{\sqrt{4+t^6}}\right)},~~~~~~
		f_1(t)=(0,0)\\
		\widetilde{\nu}_2(t)=&
{\color{black}\left(-\frac{2}{(1+t^6)\sqrt{4+t^6}},-\frac{3t^3+t^9}{(1+t^6)\sqrt{4+t^6}}
\right)},~~~~~~
		f_2(t)={\color{black}\left(\frac{t^9}{6(1+t^6)},-\frac{t^6}{6(1+t^6)}\right)}.
	\end{align*}
	Since the set of $t\in\mathbb{R}_{+}$ satisfying $|\frac{d\lambda}{dt}(t)| \leq |\beta(t)|$ is dense in $\mathbb{R}_{+}$, by the assertion (3) of Theorem \ref{t1.7}, we have $f_2(t)=\mathcal{P}e_{2\gamma(t),0}(t)$. Moreover, by the definition of the pedal curve of a frontal, we can verify that $f_2(t)$ is the pedal curve of $2\gamma(t)$  relative to the origin (see Figure 7).
	\begin{figure}[htbp]
		\begin{center}
			\small
			\begin{tabular}{cc}
				\includegraphics[width=70mm]{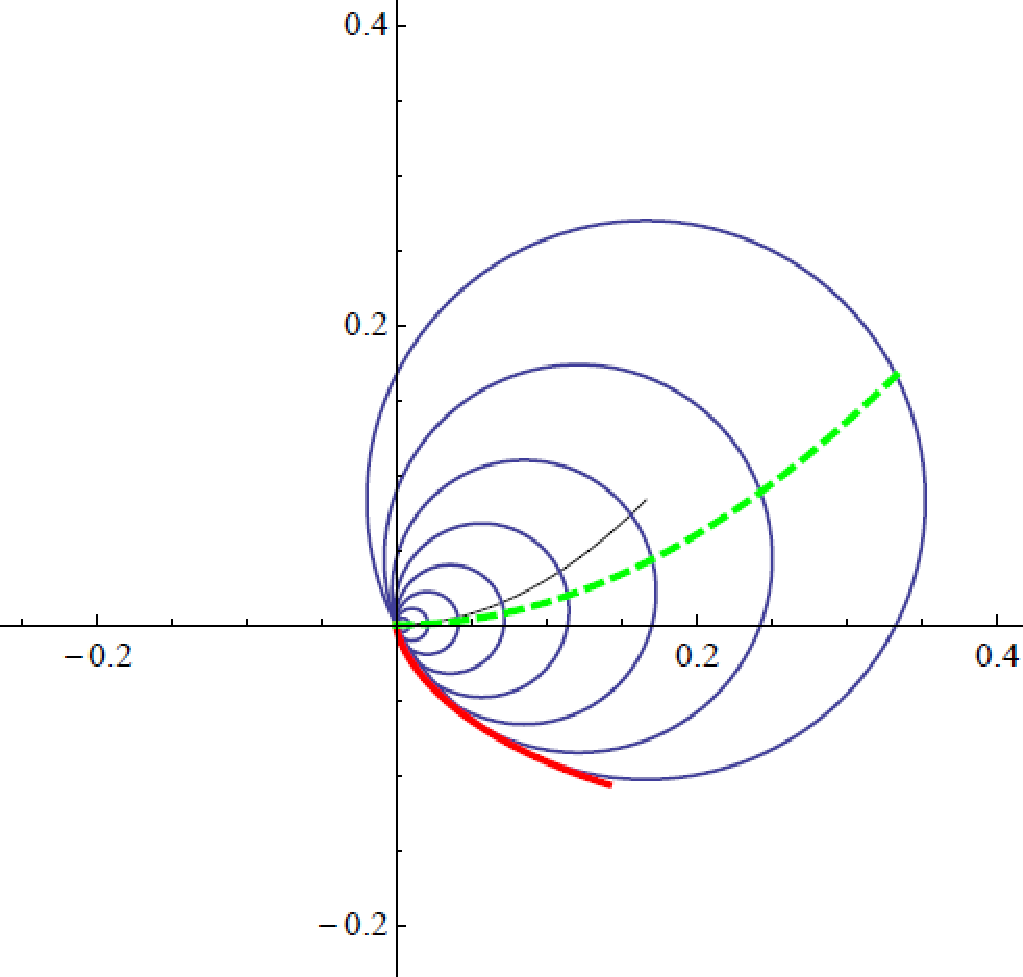} \\
				FIGURE 7. The circle family $\mathcal{C}_{(\gamma,\lambda)}$, the loci of centers $\gamma(t)$ (thin),\\ the envelope $f_2(t)$ (thick) and the curve $2\gamma(t)$ (dashed).
			\end{tabular}
		\end{center}
	\end{figure}
\end{example}

\section{Applications}\label{section4}
In this section, we investigate some applications of Theorem \ref{t1.7}.

\begin{proposition}\label{p4.1}
	Suppose that the circle family $\mathcal{C}_{(\gamma,\lambda)}$ creates a unique envelope $f(t)$, then $f:I\rightarrow\mathbb{R}^2$ is a frontal with the Gauss mapping $\widetilde{\nu}:I\rightarrow S^1$ and the curvature is
	\begin{align*}
		\bigg(l(t),~l(t)\lambda(t)\bigg).
	\end{align*}
	Moreover, $t_0$ is a singular point of $f(t)$ if and only if $t_0$ is an inflection point of $\gamma(t)$.
\end{proposition}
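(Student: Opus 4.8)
The plan is to obtain Proposition~\ref{p4.1} as a direct specialization of assertion~(1) of Theorem~\ref{t1.7} to the case in which the created envelope is unique, so almost no new computation is needed.

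First I would invoke Theorem~\ref{t1.4}~(3-i): the hypothesis that $\mathcal{C}_{(\gamma,\lambda)}$ creates a unique envelope is equivalent to the set of $t\in I$ with $\beta(t)\neq 0$ and $\frac{d\lambda}{dt}(t)=\pm\beta(t)$ being dense in $I$. Exactly as in the proof of assertion~(2-i) in Subsection~\ref{subsection2.2}, continuity of $t\mapsto\widetilde{\nu}(t)\cdot\mu(t)$ together with the bound $|\widetilde{\nu}(t)\cdot\mu(t)|\le 1$ forces $\widetilde{\nu}(t)\cdot\mu(t)=\pm 1$, hence $\cos\theta(t)=\mp 1$, for every $t\in I$. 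Since $\theta$ is $C^\infty$ and $I$ is connected, $\theta$ must then be a constant lying in $\pi\mathbb{Z}$; in particular $\dot{\theta}(t)\equiv 0$ and $\sin\theta(t)\equiv 0$ on $I$.

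Next I would substitute this information into assertion~(1) of Theorem~\ref{t1.7}: that assertion already tells us that $f$ is a frontal with Gauss mapping $\widetilde{\nu}$, and its curvature $\big(l(t)\pm\dot{\theta}(t),\,\lambda(t)[l(t)\pm\dot{\theta}(t)]\pm\beta(t)\sin\theta(t)\big)$ collapses, upon inserting $\dot{\theta}\equiv 0$ and $\sin\theta\equiv 0$, precisely to $\big(l(t),\,l(t)\lambda(t)\big)$, which is the curvature claimed in the statement.

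Finally, for the last sentence I would use the characterization of singular points recalled right after Definition~\ref{definition1}: $t_0$ is a singular point of $f$ exactly when $\beta_f(t_0)=0$, i.e.\ $l(t_0)\lambda(t_0)=0$; since $\lambda$ is a positive function, this holds if and only if $l(t_0)=0$, i.e.\ if and only if $t_0$ is an inflection point of $\gamma$. I do not anticipate any genuine obstacle: the whole proposition is a corollary of Theorem~\ref{t1.7}~(1), and the only mildly delicate step — that the density condition pins $\theta$ down to a constant in $\pi\mathbb{Z}$ rather than merely pinning $\cos\theta$ to $\pm 1$ on a dense subset — is already dispatched by the continuity argument used in Subsection~\ref{subsection2.2}.
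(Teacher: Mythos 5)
Your proposal is correct and follows essentially the same route as the paper: the uniqueness hypothesis (via Theorem~\ref{t1.4}~(3-i) and the continuity argument of Subsection~\ref{subsection2.2}) forces $\widetilde{\nu}(t)=\pm\mu(t)$, hence $\theta(t)=k\pi$ constant, and substituting $\dot{\theta}\equiv 0$, $\sin\theta\equiv 0$ into assertion~(1) of Theorem~\ref{t1.7} gives the curvature $\bigl(l(t),\,l(t)\lambda(t)\bigr)$, after which positivity of $\lambda$ yields the singular-point/inflection-point equivalence exactly as in the paper's proof.
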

\begin{proof}
	By the assumption, the equality $\widetilde{\nu}(t)=\pm\mu(t)$ holds for any $t\in I$. It follows that $\theta(t)=k\pi$ is a constant from $\widetilde{\nu}(t)=-\cos\theta(t)\mu(t)\pm\sin\theta(t)\nu(t),$ where $k$ is an integer. By the assertion (1) of Theorem \ref{t1.7}, we obtain that the curvature of $f(t)$ is
	\begin{align*}
		\bigg(l(t),~l(t)\lambda(t)\bigg).
	\end{align*}
	Moreover, $t_0$ is a singular point of $f(t)$ if and only if $l(t_0)\lambda(t_0)=0$. Since $\lambda(t)\neq0$ for any $t\in I$, then $l(t_0)\lambda(t_0)=0$ is equivalent to $l(t_0)=0${\color{black},} which means $t_0$ is an inflection point of $\gamma(t)$.
\end{proof}

{\color{black}In} the case of
the circle family $\mathcal{C}_{(\gamma,\lambda)}$ creating a unique envelope
$f(t){\color{black},}$ {\color{black}b}y the assertion (2-i) of Theorem \ref{t1.7},
the centre $\gamma(t)$ is the evolute of $f(t)$.
Then {\color{black}the given circle} ${\color{black}C_{(\gamma(t), \lambda(t))}}$ is the osculating circle of {\color{black}$f$ at $t$} if $t$ is a regular point of $f$.
{\color{black}This fact can be generalized as follows}.

\begin{proposition}
	Suppose that the circle family $\mathcal{C}_{(\gamma,\lambda)}$ creates an envelope $f:I\rightarrow\mathbb{R}^2$ and $|\cos\theta(t_0)|=1$.
If $t_0$ is not an inflection point of $f$, then
{\color{black}the given circle} ${\color{black}C_{(\gamma(t_0), \lambda(t_0))}}$
is the osculating circle of {\color{black}$f$ at $t_0$}.
\end{proposition}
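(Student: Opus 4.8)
The plan is to show directly that both the center and the radius of the circle of curvature of the frontal $f$ at $t_0$ are, respectively, $\gamma(t_0)$ and $\lambda(t_0)$. Recall that for a frontal $f$ with curvature $(l_f,\beta_f)$ and Gauss mapping $\widetilde{\nu}$, at a point $t_0$ with $l_f(t_0)\neq 0$ the osculating circle of $f$ at $t_0$ is the circle having contact of order at least $2$ with $f$ there; its center is the center of curvature $f(t_0)-\alpha_f(t_0)\widetilde{\nu}(t_0)$, where $\alpha_f(t_0)=\beta_f(t_0)/l_f(t_0)$, and its radius is $|\alpha_f(t_0)|$. Since $l_f(t_0)\neq 0$, the quotient $\alpha_f=\beta_f/l_f$ is a well-defined $C^\infty$ function near $t_0$, so this pointwise notion makes sense even without the global relation $\beta_f=l_f\alpha_f$ on all of $I$; the locus $t\mapsto f(t)-\alpha_f(t)\widetilde{\nu}(t)$ is then the germ of the evolute of $f$ at $t_0$, whose value at $t_0$ is the center of curvature.

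First I would apply the assertion (1) of Theorem~\ref{t1.7}: since $\mathcal{C}_{(\gamma,\lambda)}$ creates the envelope $f$, the map $f$ is a frontal with Gauss mapping $\widetilde{\nu}$ and curvature
\[
\big(l_f(t),\beta_f(t)\big)=\big(l(t)\pm\dot{\theta}(t),\ \lambda(t)[l(t)\pm\dot{\theta}(t)]\pm\beta(t)\sin\theta(t)\big).
\]
The hypothesis $|\cos\theta(t_0)|=1$ is equivalent to $\sin\theta(t_0)=0$, so the last summand vanishes at $t_0$, while $l_f(t_0)=l(t_0)\pm\dot{\theta}(t_0)\neq 0$ because $t_0$ is not an inflection point of $f$. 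Dividing, $\alpha_f(t_0)=\beta_f(t_0)/l_f(t_0)=\lambda(t_0)$, and hence the radius of the osculating circle of $f$ at $t_0$ is $|\alpha_f(t_0)|=\lambda(t_0)$ (recall $\lambda>0$).

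Next I would identify the center by invoking the assertion (2) of Theorem~\ref{t1.4}, which gives $f(t)=\gamma(t)+\lambda(t)\widetilde{\nu}(t)$ for all $t$. Therefore the center of curvature of $f$ at $t_0$ is
\[
f(t_0)-\alpha_f(t_0)\widetilde{\nu}(t_0)=f(t_0)-\lambda(t_0)\widetilde{\nu}(t_0)=\gamma(t_0).
\]
Combining the two computations, the osculating circle of $f$ at $t_0$ is the circle centered at $\gamma(t_0)$ of radius $\lambda(t_0)$, that is, $C_{(\gamma(t_0),\lambda(t_0))}$. The only genuinely delicate point, which I regard as the main obstacle, is the pointwise interpretation of the osculating circle discussed in the first paragraph: one must be sure that $\alpha_f$ is available near $t_0$ purely from the non-inflection hypothesis, rather than from a global function $\alpha_f$ with $\beta_f=l_f\alpha_f$ on all of $I$; once that is granted, the argument reduces to the short substitution above.
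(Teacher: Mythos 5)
Your proof is correct and follows essentially the same route as the paper: both invoke assertion (1) of Theorem \ref{t1.7}, use $|\cos\theta(t_0)|=1$ (i.e.\ $\sin\theta(t_0)=0$) to get $\beta_f(t_0)=\lambda(t_0)l_f(t_0)$, and then identify the center and radius of the osculating circle via the evolute formula $f(t_0)-\lambda(t_0)\widetilde{\nu}(t_0)=\gamma(t_0)$. The only cosmetic difference is that the paper phrases the key identity through $\dot f(t_0)=\lambda(t_0)\dot{\widetilde{\nu}}(t_0)$ and explicitly notes that $t_0$ is then a regular point of $f$, whereas you read it off directly from the curvature formula; both are the same computation.
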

\begin{proof}
	By the assertion (1) of Theorem \ref{t1.7}, it follows that $f(t)$ is a frontal with the moving frame $\{\widetilde{\nu}(t), \widetilde{\mu}(t)\}$ and the curvature \begin{align*}
		l_f(t)=l(t)\pm\dot{\theta}(t),~~~~~~\beta_f(t)=\lambda(t)[l(t)\pm\dot{\theta}(t)]\pm\beta(t)\sin\theta(t).
	\end{align*}
	In other words, we have $$\frac{d}{dt}\widetilde{\nu}(t)=l_f(t)\widetilde{\mu}(t),~~~ \frac{d}{dt}\widetilde{\mu}(t)=-l_f(t)\widetilde{\nu}(t),~~~ \frac{df}{dt}(t)=\beta_f(t)\widetilde{\mu}(t).$$
	According to the proof of the assertion (1) of Theorem \ref{t1.7}, $|\cos\theta(t_0)|=1$ is equivalent to $\frac{df}{dt}(t_0)=\lambda(t_0)\frac{d}{dt}\widetilde{\nu}(t_0)$. It follows that $\lambda(t_0)l_f(t_0)=\beta_f(t_0)$ if $|\cos\theta(t_0)|=1$. In this case, $t_0$ is a regular point of $f(t)$ if and only it is not an inflection point of $f(t)$. According to the definition of the evolute of the frontal, the point $\gamma(t_0)=f(t_0)-\lambda(t_0)\widetilde{\nu}(t_0)$ is on $\mathcal{E}v(f)(t)$ and $\lambda(t_0)$ is the radii of the osculating circle of {\color{black}$f$ at $t_0$}.
\end{proof}

\begin{proposition}
	Suppose that the circle family $\mathcal{C}_{(\gamma,\lambda)}$ creates two envelopes $f_i:I\rightarrow\mathbb{R}^2$ where $i=1,2$. Then, the following three hold.
	\begin{enumerate}
		\item[(1)] {\color{black}Suppose moreover that}
$f_1(t)$ is a constant vector{\color{black}.   T}hen
$t_0$ is a singular point of $f_2(t)$ if and only if $t_0$
is an inflection point of $\gamma(t)$.
		\item[(2)] {\color{black}Suppose moreover that}
$f_1(t)$ and $f_2(t)$ are both constants and $f_1(t)\neq f_2(t)${\color{black}.
T}hen {\color{black}the set $\{\gamma(t)\; |\; t\in I\}$ belongs to}
the perpendicular bisector of $f_1f_2$.
		\item[(3)] {\color{black}Suppose moreover that}
$f_1(t)=f_2(t)=constant${\color{black}.   T}hen
{\color{black}the set $\{\gamma(t)\; |\; t\in I\}$ belongs to} a line which passes through $f_1(t)$.
	\end{enumerate}
\end{proposition}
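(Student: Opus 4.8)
The plan is to dispatch the three assertions one at a time; each one collapses to a short computation once the right tool is in hand. For (1), recall that by the assertion (2) of Theorem~\ref{t1.4} the two envelopes have the form $f_i(t)=\gamma(t)+\lambda(t)\widetilde\nu_i(t)$ with $\widetilde\nu_1(t)=-\cos\theta(t)\mu(t)+\sin\theta(t)\nu(t)$ and $\widetilde\nu_2(t)=-\cos\theta(t)\mu(t)-\sin\theta(t)\nu(t)$, and by the assertion (1) of Theorem~\ref{t1.7} each $f_i$ is a frontal whose curvature is obtained from the displayed formula there by choosing the upper signs for $f_1$ and the lower signs for $f_2$. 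Adding the two second (the ``$\beta$'') components, the $\dot\theta$ and the $\sin\theta$ terms cancel, so that $\beta_{f_1}(t)+\beta_{f_2}(t)=2\lambda(t)l(t)$. Since $f_1$ is a constant vector we have $\beta_{f_1}\equiv 0$, hence $\beta_{f_2}(t)=2\lambda(t)l(t)$; because $\lambda(t)>0$ and because $t_0$ is a singular point of the frontal $f_2$ exactly when $\beta_{f_2}(t_0)=0$, this holds if and only if $l(t_0)=0$, that is, if and only if $t_0$ is an inflection point of $\gamma$.

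For (2), I would use the elementary fact (already invoked in the proof of the assertion (3) of Theorem~\ref{t1.7}) that a constant map $P$ is an envelope of $\mathcal{C}_{(\gamma,\lambda)}$ if and only if $P\in C_{(\gamma(t),\lambda(t))}$ for every $t$, i.e.\ $\|\gamma(t)-P\|=\lambda(t)$ for all $t\in I$; condition (2) of Definition~\ref{d1.2} is automatic for a constant map. Applying this with $P=f_1$ and with $P=f_2$ yields $\|\gamma(t)-f_1\|=\lambda(t)=\|\gamma(t)-f_2\|$, so every $\gamma(t)$ is equidistant from the two distinct points $f_1$ and $f_2$; consequently $\{\gamma(t)\mid t\in I\}$ is contained in the perpendicular bisector of the segment $f_1f_2$.

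For (3), from $f_1(t)=f_2(t)$ and $f_i(t)=\gamma(t)+\lambda(t)\widetilde\nu_i(t)$ with $\lambda>0$ we obtain $\widetilde\nu_1=\widetilde\nu_2$, that is $2\sin\theta(t)\,\nu(t)=0$, hence $\sin\theta\equiv 0$ on the interval $I$; thus $\theta$ is a constant lying in $\pi\Z$ and $\widetilde\nu(t)=-(\cos\theta)\mu(t)=\varepsilon\mu(t)$ with $\varepsilon=-\cos\theta=\pm1$ fixed. Translating so that $f_1$ is the origin, we get $\gamma(t)=-\varepsilon\lambda(t)\mu(t)$; differentiating this and inserting the Frenet relations $\dot\mu=-l\nu$ and $\dot\gamma=\beta\mu$ forces $\varepsilon\lambda(t)l(t)\,\nu(t)=0$, so $l\equiv 0$ because $\lambda>0$. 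Then $\dot\nu=l\mu\equiv 0$, so $\mu$ is a constant unit vector $\mu_0$, and $\gamma(t)=-\varepsilon\lambda(t)\mu_0$ visibly lies on the line through $f_1$ with direction $\mu_0$. The only point that needs any care in the whole argument is keeping the two sign choices for $\widetilde\nu_1,\widetilde\nu_2$ coherent with the curvature formula of Theorem~\ref{t1.7} (and noticing that the cancellation $\beta_{f_1}+\beta_{f_2}=2\lambda l$ is exactly what makes (1) work); there is no substantial obstacle beyond this bookkeeping.
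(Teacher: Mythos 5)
Your proposal is correct and follows essentially the same route as the paper: part (1) is the identical cancellation $\beta_{f_1}(t)+\beta_{f_2}(t)=2\lambda(t)l(t)$, and part (2) is exactly the paper's equidistance argument $\|\gamma(t)-f_1\|=\lambda(t)=\|\gamma(t)-f_2\|$ (you merely omit the paper's redundant preliminary derivation of $l\equiv 0$ there). In part (3) you get $\widetilde{\nu}=\pm\mu$ and $l\equiv 0$ by differentiating $\gamma(t)=f_1-\varepsilon\lambda(t)\mu(t)$ with the Frenet formula rather than by quoting $\beta_{f_2}(t)=2\lambda(t)l(t)=0$ as the paper does, but this is only a cosmetic variation of the same argument.
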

\begin{proof}
	(1) By the assertion (1) of Theorem \ref{t1.7}, we know that the envelope $f(t)$ is a frontal with Gauss mapping $\widetilde{\nu}:I\rightarrow S^1$ and the curvature
	\begin{align*}
		l_f(t)=l(t)\pm\dot{\theta}(t),~~~~~~\beta_f(t)=\lambda(t)[l(t)\pm\dot{\theta}(t)]\pm\beta(t)\sin\theta(t).
	\end{align*}
	If $f_1(t)$ is a constant vector, then $\beta_{f_1}(t)=0$. Moreover, since $\beta_{f_1}(t)+\beta_{f_2}(t)=2\lambda(t)l(t)$, we have $\beta_{f_2}(t)=2\lambda(t)l(t)$. As $\lambda(t)>0$, $\beta_{f_2}(t_0)=0$ if and only if $l(t_0)=0$.\par
	(2) By the proof of the assertion (1), we have $\beta_{f_2}(t)=2\lambda(t)l(t)$. Since $f_2(t)$ is also a constant, it follows that $l(t)=0$ for any $t\in I$. Thus, $\gamma(t)$ is a part of a straight line.
Notice that
{\color{black}the assumption \lq\lq
both} $f_1(t)$ and $f_2(t)$ are constants{\color{black}\rq\rq\; implies} that
	\begin{align*}
		\big(\gamma(t)-f_1{\color{black}(t)}\big)
\cdot\big(\gamma(t)-f_1{\color{black}(t)}\big)=
\big(\gamma(t)-f_2{\color{black}(t)}\big)\cdot\big(\gamma(t)-f_2{\color{black}(t)}\big).
	\end{align*}
	By simplification, we have
	\begin{align*}
		2\gamma(t)\cdot(f_2{\color{black}(t)}-f_1{\color{black}(t)})-
(f_2{\color{black}(t)}\cdot f_2{\color{black}(t)}-f_1{\color{black}(t)}\cdot f_1{\color{black}(t)})=0.
	\end{align*}
	This implies that $\gamma(t)$ is a part of the line with the normal vector $f_2{\color{black}(t)}-f_1{\color{black}(t)}$ {\color{black}for any $t\in I$}.
Moreover, the line passes through the point
$\frac{f_2{\color{black}(t)}+f_1{\color{black}(t)}}{2}$ since
	\begin{align*}
		(f_2{\color{black}(t)}+f_1{\color{black}(t)})\cdot
(f_2{\color{black}(t)}-f_1{\color{black}(t)})-(f_2{\color{black}(t)}\cdot f_2{\color{black}(t)}-
f_1{\color{black}(t)}\cdot f_1{\color{black}(t)})=0.
	\end{align*}

(3) Suppose that $f_1(t)=f_2(t)=f(t)=\mbox{constant}$. Then,
by {\color{black}the} assertion (2-i) of Theorem \ref{t1.7},
it follows that $\widetilde{\nu}(t)=\pm\mu(t)$ for any $t\in I$.
	Since $\beta_{f_2}(t)=2\lambda(t)l(t)=0$ and $\lambda(t)\neq0$,
{\color{black}we have} $l(t)=0$ for any $t\in I$.
{\color{black}Thus,} {\color{black}the set $\{\gamma(t)\; |\; t\in I\}$ is a subset}
of a line. Moreover, because $f(t)=\gamma(t)+\lambda(t)\widetilde{\nu}(t)$ and
$\widetilde{\nu}(t)=\pm\mu(t)$, {\color{black}it follows that
the set $\{\gamma(t)\; |\; t\in I\}$ is a subset} of a line which passes through $f_1(t)$.
\end{proof}

In \cite{IT2},  S. Izumiya and N. Takeuchi
{\color{black}investigate}
pedaloids relative to the origin and evolutoids of plane curves. They provided a beautiful relation between evolutoids and pedaloids.
{\color{black}Finally in this paper, we give an alternative proof of their result}
as an application of 
Theorem \ref{t1.7}.

\begin{proposition}
	Let $f:I\rightarrow\mathbb{R}^2$ be a frontal with the moving frame $\{\widetilde{\nu},\widetilde{\mu}\}$ and the curvature $(l_f,\beta_f)$. Suppose that there exists a smooth function $\lambda(t)\neq0$ such that $\beta_f(t)=\lambda(t)l_f(t)$ for any $t\in I$. Then we have
	\begin{align*}
		\mathcal{P}e_{f,P}[\phi](t)=\mathcal{P}e_{\mathcal{E}v_f[\phi+\frac{\pi}{2}],P}(t),
	\end{align*}
	where $P\notin
{\color{black}\{
\mathcal{E}v_f[\phi+\frac{\pi}{2}](t)\; |\; t\in I\}}$ is
a{\color{black}n arbitrary} fixed point {\color{black}of $\R^2$}.
\end{proposition}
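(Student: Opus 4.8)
The plan is to pass to the frontal $g:=\mathcal{E}v_f[\phi+\frac{\pi}{2}]$, to identify its Gauss frame explicitly, and then to compare the two pedal curves straight from their defining formulas. Since $\beta_f=l_f\lambda$, the evolute $\mathcal{E}v(f)$ and all evolutoids of $f$ are defined with $\alpha_f=\lambda$. Introduce the unit vector field $w(t)=\sin\phi\,\widetilde{\mu}(t)-\cos\phi\,\widetilde{\nu}(t)$, which is exactly the direction vector occurring in the $\phi$-pedaloid of $f$, and set $v(t)=\cos\phi\,\widetilde{\mu}(t)+\sin\phi\,\widetilde{\nu}(t)$, so that $\{v(t),w(t)\}$ is orthonormal and $J(v(t))=w(t)$. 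From the definition of evolutoids together with $\sin(\phi+\frac{\pi}{2})=\cos\phi$ and $\cos(\phi+\frac{\pi}{2})=-\sin\phi$, one gets $g(t)=f(t)+\lambda(t)\cos\phi\,w(t)$.

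First I would compute $\dot g$. The Frenet formula $\dot{\widetilde{\nu}}=l_f\widetilde{\mu}$, $\dot{\widetilde{\mu}}=-l_f\widetilde{\nu}$ gives $\dot w=-l_f\,v$, so $\dot g=\beta_f\widetilde{\mu}+\dot\lambda\cos\phi\,w-\lambda l_f\cos\phi\,v$. Writing $\widetilde{\mu}=\cos\phi\,v+\sin\phi\,w$ and using $\beta_f=\lambda l_f$, the $v$-components cancel, leaving $\dot g=(\lambda l_f\sin\phi+\dot\lambda\cos\phi)\,w$. Thus $\dot g$ is everywhere proportional to $w$, and $g$ is a frontal whose Gauss mapping may be taken to be $\nu_g=v$, whence $\mu_g=J(\nu_g)=w$.

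Next I would compare the pedals. Since $P\notin\{g(t)\mid t\in I\}$, the pedal of $g$ relative to $P$ is $\mathcal{P}e_{g,P}(t)=g(t)+[(P-g(t))\cdot\mu_g(t)]\mu_g(t)=g(t)+[(P-g(t))\cdot w(t)]w(t)$. Substituting $g(t)=f(t)+\lambda(t)\cos\phi\,w(t)$ and expanding, the two copies of $\lambda(t)\cos\phi\,w(t)$ cancel because $w\cdot w=1$, so $\mathcal{P}e_{g,P}(t)=f(t)+[(P-f(t))\cdot w(t)]w(t)$. On the other hand, inserting $w(t)=\sin\phi\,\widetilde{\mu}(t)-\cos\phi\,\widetilde{\nu}(t)$ into the definition of the $\phi$-pedaloid shows $\mathcal{P}e_{f,P}[\phi](t)=f(t)+[(P-f(t))\cdot w(t)]w(t)$ as well, and the two expressions coincide, which is the claim.

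Finally, this identity can also be obtained from Theorem \ref{t1.7} directly, which is the viewpoint I would record at the end: applying Theorem \ref{t1.7}(2) with $\mathcal{C}_{(\mathcal{E}v(f),\lambda)}$ in the role of $\mathcal{C}_{(\gamma,\lambda)}$, so that $f$ is its envelope, the circle family $C_{(\gamma_2,\lambda_2)}$ of assertion (2-iv) is precisely the family of circles having the segment joining $P$ and $g(t)$ as a diameter; by (2-iv) its non-constant envelope is $\mathcal{P}e_{f,P}[\phi]$, while assertion (3), applied to that same family with constant envelope $f_1=P$ (note $2\gamma_2(t)-f_1=g(t)$), identifies this envelope with $\mathcal{P}e_{g,P}$. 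I do not expect a serious obstacle: the work is essentially bookkeeping with the rotated frame $\{v,w\}$, and the sign ambiguity $\mu_g=\pm w$ is harmless since $w$ enters the pedal formula quadratically. The only point to watch for the Theorem \ref{t1.7} route is the degenerate case in which $\lambda$ is locally constant, so that $\mathcal{E}v(f)$ collapses to a point and the density hypotheses of Theorem \ref{t1.7}(2) are not met; the direct computation above is insensitive to this and handles all cases uniformly.
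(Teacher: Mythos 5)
Your proposal is correct, but your main argument takes a genuinely different route from the paper's. The paper proves this proposition deliberately as an application of Theorem \ref{t1.7}: it sets $\gamma(t)=f(t)-\lambda(t)\widetilde{\nu}(t)$, invokes Theorem \ref{t1.4} (3-i) and Theorem \ref{t1.7} (2-i) to realize $f$ as the unique envelope of $\mathcal{C}_{(\gamma,|\lambda|)}$, and then combines assertions (2-iv) and (3) of Theorem \ref{t1.7}, applied to the family of circles having the segment from $P$ to $\mathcal{E}v_f[\phi+\frac{\pi}{2}](t)$ as a diameter, to identify the non-constant envelope with both $\mathcal{P}e_{f,P}[\phi]$ and $\mathcal{P}e_{\mathcal{E}v_f[\phi+\frac{\pi}{2}],P}$ --- exactly the route you only sketch in your closing paragraph. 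Your primary argument is instead a direct frame computation: with $w=\sin\phi\,\widetilde{\mu}-\cos\phi\,\widetilde{\nu}$ and $v=\cos\phi\,\widetilde{\mu}+\sin\phi\,\widetilde{\nu}$ you verify $\mathcal{E}v_f[\phi+\frac{\pi}{2}]=f+\lambda\cos\phi\,w$, observe that $\beta_f=\lambda l_f$ kills the $v$-component of its derivative so that $\pm w$ serves as its $\mu$-vector, and reduce both sides of the claimed identity to $f+[(P-f)\cdot w]\,w$; the sign ambiguity in the Gauss map of the evolutoid is indeed harmless since $w$ enters quadratically. What each approach buys: the paper's route showcases the envelope machinery (the stated purpose of Section \ref{section4}), but it quietly uses the density hypotheses of Theorem \ref{t1.7} (2) --- which here amount to density of $\{t\in I\,|\,\dot\lambda(t)\neq 0\}$, since $\beta_\gamma=\dot\lambda$ --- as well as $P\notin\{\mathcal{E}v_f[\phi+\frac{\pi}{2}](t)\,|\,t\in I\}$ to keep the radii positive; your computation needs neither, so it uniformly covers the degenerate case you flag (e.g.\ $\lambda$ locally constant, where the circle family degenerates and uniqueness of the envelope fails) and in fact proves the identity for every choice of $P$. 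The trade-off is that your route bypasses Theorem \ref{t1.7}, which is precisely what this proposition was designed to illustrate.
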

\begin{proof}
	Let $\gamma:I\rightarrow\mathbb{R}^2$ be a frontal 
defined by
	\begin{align*}
		\gamma(t)=f(t)-\lambda(t)\widetilde{\nu}(t).
	\end{align*}
	By the assertion (3-i) of Theorem \ref{t1.4} and the assertion (2-i) of
{\color{black}Theorem} \ref{t1.7},
{\color{black}it follows that}  the circle family $\mathcal{C}_{(\gamma,|\lambda|)}$
creates a unique envelope $f(t)$, and $\gamma(t)$ is the evolute of $f(t)$. Set $\nu(t)=\widetilde{\mu}(t)$ and $\mu(t)=-\widetilde{\nu}(t)$, then $\{\nu,\mu\}$ is a moving frame along $\gamma(t)$.
	We consider {\color{black}the} circle family $\mathcal{C}_{(\gamma_1,\lambda_1)}$ where
	\begin{align*}
		\gamma_1(t)=&\frac{\gamma(t)+\lambda(t)\sin\phi\big(-\sin\phi\mu(t)+\cos\phi\nu(t)\big)+P}{2},\\
		\lambda_1(t)=&\frac{\|\gamma(t)+\lambda(t)\sin\phi\big(-\sin\phi\mu(t)+\cos\phi\nu(t)\big)-P\|}{2}.
	\end{align*}
	{\color{black}In the case}
$\lambda(t)>0$, by the assertion (2-iv) of Theorem \ref{t1.7}, it follows
{\color{black}that}
the circle family $C_{(\gamma_1,\lambda_1)}$ creates
two envelopes $f_1(t)$ and $f_2(t)$ such that $f_1(t)=P$ and $f_2(t)=\mathcal{P}e_{f,P}[\phi](t)$ if $P\notin\mathcal{E}v_f[\phi+\frac{\pi}{2}](t)$.
Moreover, by the assertion (3) of Theorem \ref{t1.7}, we have
	\begin{align*}
		f_2(t)=\mathcal{P}e_{f,P}[\phi](t)=\mathcal{P}e_{\gamma_2,P}(t),
	\end{align*}
	where
	\begin{align*}
		\gamma_2(t)=2\gamma_1(t)-P=\gamma(t)+\lambda(t)\sin\phi\big(-\sin\phi\mu(t)+\cos\phi\nu(t)\big).
	\end{align*}
	By the proof of {\color{black}the} assertion (2-ii) {\color{black}of Theorem \ref{t1.7}},
we have
	\begin{align*}
		\gamma_2(t)=\gamma(t)+\lambda(t)\sin\phi\big(-\sin\phi\mu(t)+\cos\phi\nu(t)\big)=\mathcal{E}v_f[\phi+\frac{\pi}{2}](t).
	\end{align*}
	Thus, we obtain
	\begin{align*}
		\mathcal{P}e_{f,P}[\phi](t)=\mathcal{P}e_{\mathcal{E}v_f[\phi+\frac{\pi}{2}],P}(t)=f_2(t).
	\end{align*}
{\color{black}In the case}
$\lambda(t)<0$, {\color{black}the proof can be given similarly
as the one given in the case
$\lambda(t)>0$}.
\end{proof}


\section*{Acknowledgement}
{This work was supported by the Research Institute for Mathematical Sciences, a Joint Usage/Research Center located in Kyoto University.\par
	The first author is supported by the National Natural Science Foundation of
	China (Grant No. 12001079),
	Fundamental Research Funds for the Central Universities
	(Grant No. 3132023205) and China Scholarship Council.
	The second author (corresponding author) is 
	supported by JSPS KAKENHI (Grant No. 23K03109).
	\appendix
}
%
%

\end{document}